\newtheorem{thm}{Theorem}[section]
\newtheorem{Prop}[thm]{Proposition}
\newtheorem{lemma}[thm]{Lemma}
\newtheorem{co}[thm]{Corollary}
\theoremstyle{definition}
\newtheorem*{remark}{Remark}
\newcommand{\bq}{{\mathbb Q}}
\newcommand{\br}{{\mathbb R}}
\newcommand{\ra}{\rightarrow}
\begin{document}
\title[Proportionality principle]
{Proportionality principle for the simplicial volume of families of $\mathbb{Q}$-rank $1$ locally symmetric spaces}\author{Michelle Bucher, Inkang Kim and  Sungwoon Kim}
\date{}

\begin{abstract}
We establish the proportionality principle between the Riemannian volume and locally finite simplicial volume for $\mathbb{Q}$-rank $1$ locally symmetric spaces covered by products of hyperbolic spaces, giving the first examples for manifolds whose cusp groups are not necessarily amenable. Also, we give a simple direct proof of the proportionality principle for the locally finite simplicial volume and the relative simplicial volume of $\mathbb{Q}$-rank $1$ locally symmetric spaces with amenable cusp groups established by L\"oh and Sauer \cite{LoehSauerHilbert}.\end{abstract}

\thanks{M. Bucher gratefully acknowledges support from the Swiss National Science Foundation (grant  noPP00P2-128309/1). I. Kim gratefully acknowledges the partial support
of NRF grant  (R01-2008-000-10052-0) and a warm support of IHES and IHP
during his stay. M. Bucher and I. Kim are thankful to the  Mittag-Leffler Institute (Djursholm, Sweden) for its hospitality during the preparation of this paper.} \maketitle

\section{Introduction}

Gromov introduced the simplicial volume $\| M\| $ of closed manifolds in the beginning of the 80's and established the following general proportionality principle for closed manifolds.

\begin{thm}[Gromov-Thurston \cite{Gromov,Th78}] Let $(M,g)$ be a closed Riemannian manifold. Then there exists a constant $c(\widetilde{M},\widetilde{g})\in \mathbb{R}_{>0}\cup \{+\infty \}$ depending only on the universal cover $(\widetilde{M},\widetilde{g})$ such that
$$ \| M\| =\frac{\mathrm{Vol}(M,g)}{c(\widetilde{M},\widetilde{g})}.$$
\end{thm}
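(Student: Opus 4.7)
The plan is to prove the two inequalities $\|M\| \geq \mathrm{Vol}(M,g)/c(\widetilde{M},\widetilde{g})$ and $\|M\| \leq \mathrm{Vol}(M,g)/c(\widetilde{M},\widetilde{g})$ separately, using Gromov's duality between singular $\ell^1$-homology and bounded cohomology,
\[
\|M\|^{-1} \;=\; \inf\bigl\{\,\|[\omega]\|_\infty : [\omega] \in H^n_b(M;\mathbb{R}),\ \langle[\omega],[M]\rangle = 1\,\bigr\}.
\]
The constant $c(\widetilde{M},\widetilde{g})$ itself will emerge as the sup norm of a canonical bounded representative of the volume cohomology class coming from the geometry of $\widetilde{M}$.

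On $\widetilde{M}$ I would define a \emph{straight volume cochain} $\omega^{str}$ by choosing, equivariantly under $G := \mathrm{Isom}(\widetilde{M},\widetilde{g})$, a ``straightening map'' $\mathrm{str}$ that assigns to every ordered tuple of $n+1$ points a canonical singular $n$-simplex, and setting $\omega^{str}(\sigma) = \mathrm{vol}_g(\mathrm{str}\,\sigma)$. Equivariance guarantees that $\omega^{str}$ descends to a bounded cocycle on $M$ with sup norm
\[
c(\widetilde{M},\widetilde{g}) \;=\; \sup\nolimits_\sigma\, |\mathrm{vol}_g(\mathrm{str}\,\sigma)|,
\]
and because $\mathrm{str}$ is chain-homotopic to the identity on singular chains, $[\omega^{str}]$ agrees with the ordinary Riemannian volume class in $H^n(M;\mathbb{R})$. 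Hence $(1/\mathrm{Vol}(M,g))\cdot \omega^{str}$ is a bounded cocycle pairing to $1$ with $[M]$, and the displayed duality yields $\|M\| \geq \mathrm{Vol}(M,g)/c(\widetilde{M},\widetilde{g})$; equivalently, straightening any fundamental cycle $\sum a_i \sigma_i$ gives $\mathrm{Vol}(M,g) = \sum a_i\,\mathrm{vol}_g(\mathrm{str}\,\sigma_i) \leq c(\widetilde{M},\widetilde{g})\sum|a_i|$.

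For the reverse inequality I would appeal to Thurston's smearing construction: choose a straight $n$-simplex $\sigma^\ast$ in $\widetilde{M}$ of volume close to $c(\widetilde{M},\widetilde{g})$, average its $G$-translates against the invariant measure on the space of such simplices, and push the resulting measure cycle down to $M$. This produces a fundamental class in measure homology of total mass arbitrarily close to $\mathrm{Vol}(M,g)/c(\widetilde{M},\widetilde{g})$, which translates back to a genuine singular fundamental cycle via Gromov's equivalence between measure and singular homology. The main obstacle is carrying out both the straightening and the smearing in the stated generality: geodesic straightening only makes sense for non-positively curved universal covers, so in general $\mathrm{str}$ must be produced as a measurable $G$-equivariant section of the bundle of singular simplices, and the smearing argument requires a well-behaved $G$-invariant measure. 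Reducing everything to the continuous bounded cohomology of $G$, via Gromov's isomorphism $H^\ast_b(M;\mathbb{R}) \cong H^\ast_{c,b}(G;\mathbb{R})$, is what ultimately forces the constant to depend only on $(\widetilde{M},\widetilde{g})$.
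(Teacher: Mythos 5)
The paper does not prove this statement: it is quoted as Theorem~1.1 from Gromov and Thurston and used only as background, so there is no in-paper argument to compare against. Your plan follows the classical straightening-plus-smearing route, which is the right circle of ideas, but in the stated generality (an arbitrary closed Riemannian manifold) several steps have genuine gaps. First, there is no canonical $G$-equivariant straightening on a general $\widetilde{M}$; a measurable equivariant choice can always be made, but then your constant $c(\widetilde{M},\widetilde{g})=\sup_\sigma |\mathrm{vol}_g(\mathrm{str}\,\sigma)|$ depends a priori on that choice and can be $+\infty$ even when $\|M\|>0$, so it cannot serve as the proportionality constant. The constant must be defined intrinsically --- as the seminorm of the volume class in the (continuous, bounded) cohomology of $\mathrm{Isom}(\widetilde{M})$, or simply as the common ratio $\mathrm{Vol}/\|\cdot\|$, whose independence of the quotient is the actual content of the theorem. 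Second, smearing a single straight simplex of near-extremal volume only makes sense when $\mathrm{Isom}(\widetilde{M})$ acts transitively (the locally symmetric case); for a generic metric the isometry group of $\widetilde{M}$ is discrete and there is no ``space of straight simplices'' with an invariant measure over which to average. The general argument instead smears an entire fundamental cycle of one quotient $N$ of $\widetilde{M}$ over $\mathrm{Isom}(\widetilde{M})$ to produce a measure fundamental cycle of $M$ of mass at most $\|N\|\cdot\mathrm{Vol}(M)/\mathrm{Vol}(N)$, giving $\|M\|/\mathrm{Vol}(M)\leq \|N\|/\mathrm{Vol}(N)$ and hence equality by symmetry.

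Third, you treat the passage from measure homology back to singular $\ell^1$-homology as a routine translation, but the fact that this passage is \emph{isometric} is a deep theorem (Thurston's claim, proved by L\"oh), and it is precisely the dependence on this fact that the present paper's machinery is designed to avoid: for its noncompact generalizations the paper works dually, with transfer maps on (bounded) cohomology with compact support, and in the compact case reduces everything to the isometry of $H^n_c(\mathrm{Isom}(\widetilde{M}))\rightarrow H^n(M)$ in top degree together with the duality $\|M\|=\mathrm{Vol}(M)/\|\omega_M\|_\infty$. If you want a proof in the spirit of this paper, replace smearing by the cohomological transfer and show that $trans$ is an isometric isomorphism in degree $n$; otherwise you must either invoke L\"oh's isometry theorem explicitly or restructure the smearing step at the level of $\ell^1$-chains.
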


This fundamental principle, which gives an alternative for the Hirzebruch proportionality principle in odd dimension, allows, whenever $\| M\|\neq 0$, to consider the Riemannian volume of manifolds with a fixed universal cover as a topological invariant.

For symmetric spaces of noncompact type, the proportionality constant is easily seen to be equal to the sup norm $\|  \omega_{\widetilde{M}}\|_\infty \in \mathbb{R}_{>0} $ of the volume form  $\omega_{\widetilde{M}} \in H^n_c(\mathrm{Isom}(\widetilde{M}))$ (see \cite{Bucher}). The fact that in the compact case, the simplicial volume of locally symmetric spaces of non compact type is strictly positive  was established by Lafont and Schmidt (see also \cite{Sa82} and \cite{Bu07b} for the case of locally $\mathrm{SL}(3,\mathbb{R})/\mathrm{SO}(3)$ factors) answering a conjecture of Gromov. As a consequence the proportionality constant $\|  \omega_{\widetilde{M}}\|_\infty$ is finite for symmetric spaces $\widetilde{M}$ of noncompact type.

For open manifolds, the natural generalization of the simplicial volume, is the $\ell^1$-norm of the locally finite fundamental class of $M$, which we denote by $\| M\|_\mathrm{lf}$ (see Section \ref{section: Locally finite and relative homology} below for the definition). It is however in general not well behaved as far as the proportionality principle is concerned. Indeed, Gromov showed \cite{Gromov} that $\| M_1\times M_2 \times M_3 \|_\mathrm{lf}=0$ for any open manifolds $M_1,M_2,M_3$. L\"oh and Sauer generalized this vanishing result for symmetric spaces by showing that $\| M \|_\mathrm{lf}=0$ for any $\bq$-rank $\geq 3$ locally symmetric space \cite{LoehSauerDegreeThms}. On the positive side, Gromov and Thurston noted that the proportionality principle holds for the locally finite simplicial volume of complete hyperbolic manifolds of finite volume. This result was extended by L\"oh and Sauer to symmetric spaces with amenable cusp fundamental group \cite{LoehSauerHilbert}, which includes all  $\mathbb{R}$-rank $1$ symmetric spaces and  covers some $\mathbb{Q}$-rank $1$ symmetric spaces, such as Hilbert modular varieties. A further generalization to Riemannian manifolds with amenable boundary has just been provided by the third author and Kuessner \cite{Kim-Kuessner}.

L\"oh and Sauer's proof is based on a discrete approximation of smearing in measure homology and on Gromov's equivalence theorem \cite[page 57]{Gromov}, which has just recently admitted a straightforward (and complete) proof \cite{BBFIPP}. The  proof in \cite{Kim-Kuessner}  builds on Gromov's theory of multicomplexes \cite[Section 3]{Gromov} and the duality between $\ell^1$-homology and bounded cohomology. We propose an alternative complete self contained proof based on standard techniques from bounded cohomology. We recall the precise statement of L\"oh and Sauer's result which involves also the relative simplicial volume whose definition is given in Section \ref{section: Locally finite and relative homology}:

\begin{thm}[\cite{LoehSauerHilbert}] \label{thm:PropPrincLocFin}Let $V$ be a compact manifold with boundary $\partial V$ such that its interior $M=\mathrm{Int}(V)$ is a
 complete finite volume locally symmetric space. If the fundamental group of any connected component of $\partial V$ is amenable, then
$$\| V,\partial V \|=\| M\|_\mathrm{lf}=\frac{\mathrm{Vol}(M)}{\| \omega_{\widetilde{M}}\| _\infty}.$$
\end{thm}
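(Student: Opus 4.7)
The plan is to prove both equalities via the duality between relative bounded cohomology and the relative simplicial norm, using the amenability of the cusps to pass between the absolute and relative theories. Write $G=\mathrm{Isom}(\widetilde{M})$ and $\Gamma=\pi_1(V)$. I would first establish
\[
\|V,\partial V\|=\frac{\mathrm{Vol}(M)}{\|\omega_{\widetilde{M}}\|_\infty},
\]
and then derive $\|M\|_{\mathrm{lf}}=\|V,\partial V\|$ by moving boundary chains into the cusps at negligible $\ell^1$-cost, exploiting the amenability hypothesis at the chain level.

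For the lower bound $\|V,\partial V\|\geq\mathrm{Vol}(M)/\|\omega_{\widetilde{M}}\|_\infty$, I would invoke the duality
\[
\|V,\partial V\|\;\geq\;\frac{\langle[\beta],[V,\partial V]\rangle}{\|[\beta]\|_\infty}, \qquad [\beta]\in H^n_b(V,\partial V),
\]
applied to a carefully chosen class. The natural candidate arises from the continuous bounded volume class $[\omega_{\widetilde{M}}]\in H^n_{cb}(G)$ via the norm-non-increasing transfer $H^n_{cb}(G)\to H^n_b(\Gamma)\cong H^n_b(V)$, combined with the (inverse of the) comparison map $H^n_b(V,\partial V)\to H^n_b(V)$. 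The amenability of the cusp fundamental groups makes this comparison map an isometric isomorphism: Ivanov's vanishing gives $H^{n-1}_b(\partial V)=H^n_b(\partial V)=0$, killing the boundary terms in the long exact sequence of the pair, and Gromov's equivalence theorem upgrades the algebraic isomorphism to an isometry. The pairing $\langle[\beta],[V,\partial V]\rangle=\mathrm{Vol}(M)$ is then computed exactly as in the closed case, by representing the transferred cocycle as an integral of the volume form over geodesic simplices with vertices in a $\Gamma$-orbit.

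For the upper bound I would construct explicit relative fundamental cycles via smearing. On the thick part of $V$ one averages geodesic simplices over a fundamental domain for $\Gamma$; since each such simplex has volume at most $\|\omega_{\widetilde{M}}\|_\infty$, the resulting partial cycle has $\ell^1$-norm bounded by $\mathrm{Vol}(\textrm{thick})/\|\omega_{\widetilde{M}}\|_\infty$. The cusps are then handled by pushing the boundary of this partial cycle into $\partial V$ using an amenable average against an invariant mean on the cusp group, producing a genuine relative fundamental cycle at arbitrarily small additional $\ell^1$-cost. The corresponding locally finite construction (forgoing the final contraction into $\partial V$) yields the matching upper bound on $\|M\|_{\mathrm{lf}}$, while the restriction map $C_*(V,\partial V)\to C_*^{\mathrm{lf}}(M)$ gives $\|M\|_{\mathrm{lf}}\leq\|V,\partial V\|$.

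I expect the main technical obstacle to reside precisely in the cusp-side averaging, where neither standard geodesic straightening nor classical Thurston smearing converges. The crux will be to show that the chain-level error introduced by truncating at successive horospheres is absorbed by invariant means on the cusp groups with total $\ell^1$-cost converging to $\mathrm{Vol}(\textrm{cusp})/\|\omega_{\widetilde{M}}\|_\infty$, and, dually, that a bounded cocycle representative of the volume class can be modified to vanish on simplices supported in cusp neighbourhoods without increasing its sup norm. Once this dual pair of cusp estimates is in place, the remainder of the argument is formal bookkeeping of norms along the transfer diagram.
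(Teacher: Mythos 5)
Your architecture is the homological one (essentially L\"oh--Sauer's original route): duality against relative bounded cohomology for the lower bound, and an explicit smearing construction for the upper bound. The paper deliberately takes a different, purely cohomological route: both bounds are read off from the single identity $\|\omega_M\|_\infty=\|\omega_M\|^\infty=\|\omega_{\widetilde M}\|_\infty$ on compact-support cohomology, via the exact dualities $\|V,\partial V\|=\mathrm{Vol}(M)/\|\omega_M\|_\infty$ and $\|M\|_{\mathrm{lf}}=\mathrm{Vol}(M)/\|\omega_M\|^\infty$. The inequality $\|\omega_M\|^\infty\geq\|\omega_{\widetilde M}\|_\infty$ (your upper bound) is obtained from a transfer map $trans\colon H^n_{cpct}(M)\to H^n_c(G)$ built at the cochain level after a discretization $p^*$ adapted to the $\mathbb{Q}$-rank $1$ horoball structure; notably this half uses no amenability. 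The inequality $\|\omega_M\|_\infty\leq\|\omega_{\widetilde M}\|_\infty$ (your lower bound) comes from a $\Gamma$-equivariant cusp map $f\colon\widetilde M\to G/P$ inducing a norm-nonincreasing $f^*\colon H^n_b(\Gamma)\to H^n_{cpct,b}(M)$ with $trans_b\circ f^*=trans_\Gamma$; this is where amenability of the cusp groups enters. Your lower-bound half is sound and is essentially the dual picture of this: the isometric isomorphism $H^n_b(V,\partial V)\cong H^n_b(V)$ from the vanishing of $H^*_b(\partial V)$ plus Gromov's equivalence theorem, fed with the restriction (not transfer---the transfer goes the other way) of the continuous bounded volume class, does give $\|V,\partial V\|\geq\mathrm{Vol}(M)/\|\omega_{\widetilde M}\|_\infty$.

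The genuine gap is the upper bound. You acknowledge that neither geodesic straightening of a smeared cycle nor classical Thurston smearing converges near the cusps, and you propose to absorb the truncation errors ``by an amenable average against an invariant mean on the cusp group.'' That is not yet an argument: an invariant mean is a finitely additive functional on $\ell^\infty$ of the cusp group and does not act on singular chains to produce singular chains; converting it into an honest locally finite $\ell^1$-chain requires F{\o}lner approximations together with a proof that the resulting boundary filling has negligible $\ell^1$-cost, which is precisely the measure-homology-versus-$\ell^1$-homology machinery that L\"oh--Sauer invoke and that this paper is written to avoid. Moreover the hypothesis is being spent in the wrong place: the bound $\|M\|_{\mathrm{lf}}\leq\mathrm{Vol}(M)/\|\omega_{\widetilde M}\|_\infty$ holds for $\mathbb{Q}$-rank $1$ spaces with no amenability assumption at all (Section \ref{sec:5.3}); what is really needed is a way to evaluate the averaged object against compactly supported cochains, which the paper supplies through the map $p^*$ collapsing horoball shells to points and the locally finite families $\Phi_p$ of straightened translates. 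Until you either carry out the F{\o}lner discretization in the cusps or replace the smearing by such a dual transfer argument, the second half of your proof remains a statement of intent.
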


\begin{co}[\cite{LoehSauerHilbert}] Let $M$ be as in the theorem. Then
$$\| M\|_\mathrm{lf}>0.$$
\end{co}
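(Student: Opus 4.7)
The corollary is a direct consequence of Theorem \ref{thm:PropPrincLocFin} together with two standard observations, so my plan is simply to show that both quantities on the right-hand side of the equality
$$\| M\|_\mathrm{lf}=\frac{\mathrm{Vol}(M)}{\| \omega_{\widetilde{M}}\| _\infty}$$
are strictly positive and finite respectively, and then conclude.

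First, the numerator: since $M$ is by hypothesis a complete finite volume locally symmetric space of positive dimension, $\mathrm{Vol}(M)$ is a positive real number. This requires nothing beyond the definition.

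Second, and slightly more substantial, I need the denominator to be finite. The universal cover $\widetilde{M}$ is a symmetric space, and since $M$ has finite (and non-zero) volume with noncompact boundary behavior coming from cusps, $\widetilde{M}$ is of noncompact type. As recalled in the introduction, the work of Lafont--Schmidt shows that any closed locally symmetric space of noncompact type has strictly positive classical simplicial volume. Combining this with the Gromov--Thurston proportionality principle for closed manifolds and the identification of the proportionality constant with $\| \omega_{\widetilde{M}}\| _\infty$ (as recalled in the introduction and established in \cite{Bucher}), we conclude that $\| \omega_{\widetilde{M}}\| _\infty<\infty$ for every symmetric space of noncompact type, in particular for our $\widetilde{M}$.

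Putting both facts together with Theorem \ref{thm:PropPrincLocFin} immediately gives $\| M\|_\mathrm{lf}>0$. There is no genuine obstacle here; the only subtlety is making sure one invokes the correct background results, namely the Lafont--Schmidt positivity theorem to obtain finiteness of $\| \omega_{\widetilde{M}}\| _\infty$, and the identification of the universal proportionality constant with the sup-norm of the volume form on $\widetilde{M}$.
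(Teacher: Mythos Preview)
Your proposal is correct and follows exactly the reasoning the paper intends: the corollary is stated without a separate proof, but the introduction has already recorded that Lafont--Schmidt positivity in the closed case implies $\|\omega_{\widetilde{M}}\|_\infty<\infty$ for symmetric spaces of noncompact type, so combining this with Theorem~\ref{thm:PropPrincLocFin} and $\mathrm{Vol}(M)>0$ gives the result. There is nothing to add.
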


Other important consequences include the existence of degree theorems or positivity of the minimal volume (see \cite{LoehSauerHilbert}).

Note that in the case of a hyperbolic manifold $V$ of dimension $\geq 3$ with geodesic boundary $V$, Jungreis \cite{Jungreis} and Kuessner \cite{Kuessner} showed the strict inequality $\| V,\partial V \| <\frac{ \mathrm{Vol}(V)}{ \| \omega_{\mathbb{H}^n}\|_\infty}$. (See also \cite{FrigerioPagliantini} for a proof that this strict inequality is optimal.)

The proportionality principle in Theorem \ref{thm:PropPrincLocFin} is established only for the case that a compact manifold $V$ has amenable ends.
The following theorem implies that the proportionality principle may hold even though the fundamental group of a connected component of $\partial V$ is not amenable.

\begin{thm}\label{thm:1.4} Let $M$ be a $\mathbb{Q}$-rank $1$ locally symmetric $n$-space covered by a product of $\br$-rank $1$ symmetric spaces. Let $H$ be  the isometry group of the universal cover $\widetilde{M}$ of $M$. Suppose that the comparison map $c : H^n_{c,b}(H,\br_\varepsilon) \rightarrow H^n_c(H,\br_\varepsilon)$ is an isomorphism. Then
$$\| M\|_\mathrm{lf}=\frac{\mathrm{Vol}(M)}{\| \omega_{\widetilde{M}}\| _\infty}.$$
\end{thm}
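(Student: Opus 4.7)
My strategy is to prove the two inequalities
$$\mathrm{Vol}(M) \le \|\omega_{\widetilde M}\|_\infty \cdot \|M\|_\mathrm{lf} \qquad \text{and} \qquad \|M\|_\mathrm{lf} \le \mathrm{Vol}(M)/\|\omega_{\widetilde M}\|_\infty$$
by a duality argument between bounded cohomology and $\ell^1$-locally finite homology for the first, and a smearing/discretization construction for the second. The hypothesis on the comparison map drives the first inequality; the product-of-$\br$-rank-$1$ structure of $\widetilde M$ enters in the second.

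For the lower bound on $\|M\|_\mathrm{lf}$, I would first identify $\ell^1$-locally finite chains on $M$ with $\Gamma$-equivariant $\ell^1$-chains on $\widetilde M$ (where $\Gamma=\pi_1(M)\le H$), and match continuous (bounded) cohomology of $H$ with a Dupont/van~Est resolution by $H$-invariant cochains on straight simplices of $\widetilde M$. Under this identification, the volume form gives the class $[\omega_{\widetilde M}]\in H^n_c(H,\br_\varepsilon)$ of Gromov seminorm equal to $\|\omega_{\widetilde M}\|_\infty$, exactly as in the closed case treated in \cite{Bucher}. Using the assumption that $c$ is an isomorphism, pick $\omega_b$ with $c[\omega_b]=[\omega_{\widetilde M}]$; by straightening, I can arrange $\|\omega_b\|_\infty=\|\omega_{\widetilde M}\|_\infty$. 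For any locally finite fundamental cycle $z=\sum a_i\sigma_i$,
$$\mathrm{Vol}(M)=\langle\omega_b,z\rangle\le\|\omega_b\|_\infty\cdot\|z\|_1,$$
and taking the infimum over $z$ gives $\mathrm{Vol}(M)\le\|\omega_{\widetilde M}\|_\infty\cdot\|M\|_\mathrm{lf}$. Two points require care: the $\ell^1/\ell^\infty$ pairing converges precisely because $\omega_b$ is bounded; and the difference $\omega_b-\omega_{\widetilde M}$ is a coboundary whose contribution to the pairing vanishes against a cycle (one needs the primitive to be admissible in the locally-finite pairing, which is where the hypothesis is really being used).

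For the reverse inequality I would imitate the classical Thurston-Gromov smearing in a cusped, product setting. Start with a straight simplex $\sigma_0\subset\widetilde M$ whose volume is within $\varepsilon$ of $\|\omega_{\widetilde M}\|_\infty$; smear $\sigma_0$ over the $H$-orbit using Haar measure restricted to a Borel fundamental domain for $\Gamma\le H$ of Riemannian volume $\mathrm{Vol}(M)$. This produces a $\Gamma$-equivariant measure cycle on $\widetilde M$ of total mass $\mathrm{Vol}(M)/\|\omega_{\widetilde M}\|_\infty$, and the problem is to discretize it to an honest locally finite singular cycle with comparable $\ell^1$-norm. In the L\"oh-Sauer framework, this final discretization is precisely where amenability of the cusp stabilizers is used (to close up the approximating chain near $\partial V$ in the Borel-Serre compactification). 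To bypass the non-amenability of the global cusp stabilizer I would exploit the product structure of $\widetilde M$: each $\br$-rank $1$ factor has amenable parabolic stabilizers, and each cusp cross-section of $M$ fibers, up to finite cover, over products whose building blocks are amenable. This should allow the equivariant filling to be carried out factor-by-factor and then reassembled.

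The main obstacle I anticipate lies in this last step when $\Gamma$ is an \emph{irreducible} $\bq$-rank $1$ lattice that does not split as a product. The factorwise amenable filling must be glued in a way that is compatible with $\Gamma$, and quantifying the $\ell^1$-cost of that gluing is precisely the place where I expect to need the comparison-map hypothesis again: it should ensure that a suitable bounded primitive exists on $\partial V$, so that the discretization error is controlled in $\ell^1$-norm and does not inflate the smeared mass $\mathrm{Vol}(M)/\|\omega_{\widetilde M}\|_\infty$. Thus the two appearances of the hypothesis — existence of a bounded representative for the lower bound and existence of a bounded primitive on $\partial V$ for the upper bound — should be the two sides of the same duality, and pinning down the norm preservation in the second appearance will be the technical heart of the argument.
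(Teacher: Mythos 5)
Your first inequality, $\mathrm{Vol}(M)\le\|\omega_{\widetilde M}\|_\infty\cdot\|M\|_\mathrm{lf}$, is essentially the paper's argument: the class $\omega_{\widetilde M}$ is represented by the bounded cocycle $\Theta(x_0,\dots,x_n)=\int_{[x_0,\dots,x_n]}\omega_{\widetilde M}$ (bounded precisely because geodesic $n$-simplices in a product of $\br$-rank $1$ spaces have uniformly bounded volume), injectivity of the comparison map gives $\|[\Theta]_b\|_\infty=\|\omega_{\widetilde M}\|_\infty$, and pairing $\Theta$ with a straightened locally finite fundamental cycle yields $\mathrm{Vol}(M)$. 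The point you flag about admissibility of the pairing is handled in the paper not by pairing an arbitrary bounded representative against an arbitrary locally finite cycle (which need not converge), but by L\"oh's $\ell^1$-homology reformulation of $\|M\|_\mathrm{lf}$ in (\ref{eqn:4}), restricting to fundamental cycles of finite $\ell^1$-norm and using the straightening operator of \cite{Kim-Kim}; your sketch would need the same restriction to be made precise, but the idea is the right one.

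The genuine gap is in your second inequality. The smearing-plus-discretization route is exactly where amenability of the cusp groups enters in \cite{LoehSauerHilbert}, and your proposed repair --- filling ``factor-by-factor'' using amenability of the parabolics of each $\br$-rank $1$ factor --- does not address the actual obstruction: for an irreducible $\bq$-rank $1$ lattice the cusp group $\Gamma\cap P$ contains a lattice in the semisimple part $M_{\mathbf P}$ of the rational Langlands decomposition and is genuinely non-amenable, and there is no $\Gamma$-compatible product decomposition of the cusp cross-section along which to glue factorwise fillings. Your hope that the comparison-map hypothesis supplies a ``bounded primitive on $\partial V$'' also does not follow: the hypothesis concerns $H^n_{c,b}(H,\br_\varepsilon)\to H^n_c(H,\br_\varepsilon)$ and says nothing about the boundary. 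The paper avoids all of this: by the duality $\|M\|_\mathrm{lf}=\mathrm{Vol}(M)/\|\omega_M\|^\infty$ of Theorem \ref{thm:dualityFORopen}, the desired upper bound is equivalent to $\|\omega_M\|^\infty\ge\|\omega_{\widetilde M}\|_\infty$, which is proved purely cohomologically in Section \ref{sec:5.3} via a transfer map $trans:H^n_{cpct}(M)\to H^n_c(G)$ satisfying $trans(\omega_M)=\omega_{\widetilde M}$ and $\|trans(\beta)\|_\infty\le\|\beta\|^\infty$ --- an argument requiring neither amenable cusps nor the comparison-map hypothesis. Without this transfer argument (or a genuinely worked-out equivariant filling, which you acknowledge you do not have), your proof of the upper bound is incomplete.
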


The cohomology groups will be introduced in Section \ref{section: Cont coho}. Note that the comparison map is known to be surjective in top degree, while its injectivity is only conjecturally true. When $\widetilde{M}$ is the real hyperbolic $n$-space ($n\geq 2$), the injectivity was established in \cite{BBI}. Since the proof of the validity of the K\"unneth formula passes to bounded cohomology, the following corollary is immediate:

\begin{co}\label{co: hyp prod}Let $M$ be a $\mathbb{Q}$-rank $1$ locally symmetric $n$-space covered by a product of real hyperbolic spaces $\mathbb{H}^{n_1}\times ... \times\mathbb{H}^{n_k}$ with $n_1,...,n_k\geq 2$.  Then
$$\| M\|_\mathrm{lf}=\frac{\mathrm{Vol}(M)}{\| \omega_{\mathbb{H}^{n_1}\times ... \times \mathbb{H}^{n_k}}\| _\infty}.$$
\end{co}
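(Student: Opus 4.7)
The plan is to verify the hypothesis of Theorem~\ref{thm:1.4}, namely that the comparison map $c\colon H^n_{c,b}(H,\mathbb{R}_\varepsilon)\to H^n_c(H,\mathbb{R}_\varepsilon)$ is an isomorphism, and then invoke that theorem directly. Since $\widetilde{M}=\mathbb{H}^{n_1}\times\cdots\times\mathbb{H}^{n_k}$, the isometry group $H$ is, up to the finite-order permutations of isometric factors, the direct product $H_1\times\cdots\times H_k$ with $H_i=\mathrm{Isom}(\mathbb{H}^{n_i})$; moreover, the orientation character $\varepsilon$ of $H$ decomposes as the external tensor product of the orientation characters $\varepsilon_i$ of the factors.

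First I would apply the K\"unneth formula in continuous cohomology to obtain
$$H^n_c(H,\mathbb{R}_\varepsilon)\;\cong\;\bigotimes_{i=1}^k H^{n_i}_c(H_i,\mathbb{R}_{\varepsilon_i}),$$
each factor being one-dimensional and generated by the volume cocycle on $\mathbb{H}^{n_i}$. Next I would carry out the parallel computation in continuous bounded cohomology: the standard proof of the K\"unneth formula, implemented via the $L^\infty$-resolutions of $\mathbb{R}_{\varepsilon_i}$ on the Furstenberg boundaries of the $\mathbb{H}^{n_i}$, passes to the bounded setting, so that the bounded cross product induces an isomorphism
$$\bigotimes_{i=1}^k H^{n_i}_{c,b}(H_i,\mathbb{R}_{\varepsilon_i})\;\xrightarrow{\cong}\;H^n_{c,b}(H,\mathbb{R}_\varepsilon)$$
in top degree, naturally compatible with its continuous counterpart under the comparison map.

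Third, I would invoke the main result of \cite{BBI}: for each $n_i\geq 2$ the comparison map
$$c_i\colon H^{n_i}_{c,b}(H_i,\mathbb{R}_{\varepsilon_i})\longrightarrow H^{n_i}_c(H_i,\mathbb{R}_{\varepsilon_i})$$
is an isomorphism between one-dimensional spaces. Tensoring and using the two K\"unneth identifications, the comparison map for $H$ is identified with $c_1\otimes\cdots\otimes c_k$, which is therefore also an isomorphism. The hypothesis of Theorem~\ref{thm:1.4} is thereby verified, and the stated equality follows.

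The step I expect to require the most care is precisely the passage of the K\"unneth formula to continuous bounded cohomology with the twisted coefficients $\mathbb{R}_\varepsilon$, together with its naturality under the comparison map; this is the content alluded to in the statement of the corollary by the remark that \emph{``the proof of the validity of the K\"unneth formula passes to bounded cohomology''}. Once that compatibility is in place, the reduction to the rank-one factors and the citation of \cite{BBI} are routine, and the only arithmetic is the multiplicativity of the sup norm of the volume form under external products, which is built into the K\"unneth identifications above.
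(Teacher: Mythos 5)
Your proposal is correct and follows essentially the same route as the paper: the authors also deduce the corollary from Theorem~\ref{thm:1.4} by combining the injectivity of the comparison map for $\mathrm{Isom}(\mathbb{H}^{n_i})$ from \cite{BBI} with the passage of the K\"unneth formula to (continuous) bounded cohomology in top degree. You correctly identify the bounded K\"unneth step, with twisted coefficients and its compatibility with the comparison map, as the only point requiring care; the paper likewise asserts this without further elaboration.
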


Theorem \ref{thm:1.4} and Corollary \ref{co: hyp prod} in particular hold for reducible $\mathbb{Q}$-rank $1$ locally symmetric spaces. Note that a reducible $\mathbb{Q}$-rank $1$ locally symmetric space is a product of an irreducible $\mathbb{Q}$-rank $1$ locally symmetric space and closed locally symmetric spaces. Hence, it is easy to see that each component of its ends does not have amenable fundamental group.
To our knowledge, this gives the first class of examples of $\mathbb{Q}$-rank $1$ locally symmetric spaces for which the proportionality principle is known without the assumption that the fundamental group of each component of their ends is amenable.

\subsection*{On the proofs} The idea of the proof of Theorem \ref{thm:PropPrincLocFin} is as follows. Let $\omega_M\in H^n_{cpct}(M)$ be the top dimensional compact support cohomology class which equals to the volume of $M$ when evaluated on the locally finite fundamental class $[M]_\mathrm{lf}\in H_n^\mathrm{lf}(M)$ (see Sections \ref{section: Locally finite and relative homology} and \ref{section: Cohomology with compact support} for more details). Two seminorms $\| \omega_M \|_\infty$ and $\| \omega_M \|^\infty$ are defined on $H^n_{cpct}(M)$ and it is a simple consequence of the Hahn-Banach theorem (see \cite{Gromov}, \cite{Loeh} or Theorems \ref{thm:dualityFORopen} and \ref{thm:dualityFORrelative} here) that
$$ \| M\|_\mathrm{lf}=\frac{\mathrm{Vol}(M)}{\| \omega_{{M}}\| ^\infty} \ \ \mathrm{and} \ \  \| V,\partial V \|=\frac{\mathrm{Vol}(M)}{\| \omega_{{M}}\| _\infty}.$$

To prove Theorem \ref{thm:PropPrincLocFin} we hence just need to show that
$$\| \omega_{{M}}\| _\infty = \| \omega_{{M}}\| ^\infty = \| \omega_{\widetilde{M}}\| _\infty,$$
as stated in Proposition \ref{mainProp}. In case $M$ is closed, the compact support cohomology of $M$ is nothing else than the usual singular cohomology, the norms $\| \omega_{{M}}\| _\infty $ and $ \| \omega_{{M}}\| ^\infty $ are easily seen to be equal and the equality  $\| \omega_{{M}}\| _\infty = \| \omega_{\widetilde{M}}\| _\infty$ follows from the fact that the top dimensional isomorphism
$$H^n_c(\mathrm{Isom}(\widetilde{M}))\rightarrow H^n(\pi_1(M))\cong H^n(M)$$
is isometric (see \cite{Bucher} for more details on the compact case).

If $M$ is noncompact, then $H^n(\pi_1(M))=H^n(M)=0$ and the above approach fails. We choose to pass instead to bounded cohomology. We introduce transfer maps
\[
\begin{CD}
  H^n_{cpct, b}(M)    @>trans_b>>          H^n_{c,b}(G,\br) \\
                      @VV c V                             @VV c V \\
  H^n_{cpct}(M) @>trans>>  H^n_{c}(G,\br)    \end{CD} \]
directly on the compact support cohomology in such a way that the unbounded transfer map agrees with the de Rham transfer (see Section \ref{Section: Transfer}) and hence sends the volume class $\omega_M$ to $\omega_{\widetilde{M}}$. Our proof amounts to showing that the transfer map
\[
\begin{CD}
  trans:H^n_{cpct}(M) @> >>  H^n_{c}(G,\br)    \end{CD} \]
  is an isometric isomorphism in top dimension.

Transfer maps in cohomology are the dual analogue of smearing in homology, but the cohomology approach has the striking advantage that it does not rely on the technical and difficult fact that measure homology is isometrically isomorphic to $\ell^1$-homology. Instead, all our proofs are straightforward and self contained.

The proof of Theorem \ref{thm:1.4} is a development of ideas of \cite{Kim-Kim}, where the positivity of the simplicial volume for $\bq$-rank $1$ lattices lying in the product of $\br$-rank $1$ simple groups is proven. The injectivity of the comparison map allows to conclude that $\| \omega_{{M}}\| ^\infty \leq \| \omega_{\widetilde{M}}\| _\infty$ (compare with the inequality $\| \omega_{{M}}\| ^\infty \leq \|\Theta \| _\infty<+\infty$, where $\Theta$ is one particular bounded cocycle representing $ \omega_{\widetilde{M}}$ established in \cite{Kim-Kim} for the mere positivity of the simplicial volume), and the other inequality is proven in Section \ref{sec:5.3} for the proof of Theorem \ref{thm:PropPrincLocFin}.

\subsection*{Structure of the paper} We recall some basics of locally finite and relative homology in Section \ref{section: Locally finite and relative homology}, cohomology with compact support in Section \ref{section: Cohomology with compact support}  and continuous group cohomology in Section \ref{section: Cont coho}. We introduce our new transfer maps in Section \ref{Section: Transfer} and establish there a few simple properties, in particular the first inequality of Proposition \ref{mainProp}. In Section \ref{Section: coning}, we prove the second inequality of Proposition \ref{mainProp}  by exhibiting a norm nonincreasing cohomology map $f^*:H^*_b(\pi_1(M))\rightarrow H^*_{cpct,b}(M)$ induced by a cusp map. This map should be of independent interest. Finally, the proof of Theorem \ref{thm:1.4}  is given is section \ref{section: product of rank 1}

\section{Locally finite and relative homology}\label{section: Locally finite and relative homology}

Let $M$ be a manifold.
Borel and Moore \cite{Borel-Moore} introduce a homology theory for locally compact spaces, called the Borel-Moore homology. There are several ways to describe this homology theory and we will define it here as the homology of the locally finite chain complex of $M$ and call this the locally finite homology of $M$.

Let $R$ be a field of characteristic zero.
The locally finite chain complex $C^\mathrm{lf}_*(M,R)$ of $M$ with coefficients in $R$ is defined by the chain complex of infinite singular chains $c=\sum_{i=0}^\infty a_i \sigma_i$ where $\sigma_i$ is a singular simplex, $a_i \in R$ and the sum is locally finite in the following sense: Any compact subset of $M$ intersects the image of only finitely many singular simplices occurring in $c$.
The usual boundary map $\partial$ on the singular chains is well-defined on $C^\mathrm{lf}_*(M,R)$. Then,
the locally finite homology $H^\mathrm{lf}_*(M,R)$ with coefficients in $R$ is defined by
$$H^\text{lf}_* (M,R)=H_*(C^\mathrm{lf}_*(M,R), \partial).$$

For compact manifolds, the locally finite homology coincides with the usual singular homology, but
it gives useful homology groups for noncompact manifolds. An essential  advantage of locally finite homology as opposed to singular homology is the existence of a fundamental class of any oriented manifold. Indeed, the $n$-th singular homology of a noncompact manifold vanishes, and hence cannot contain any fundamental class. The existence of a well-defined fundamental class $[M]\in H^\mathrm{lf}_n(M,\mathbb{R})$ for any oriented $n$-dimensional manifold is established via the Poincar\'e duality
$$H^\mathrm{lf}_i(M,R) \cong H_{cpct}^{n-i}(M,R)$$
with cohomology with compact support, a cohomology theory which will be briefly recalled in the next section. The duality implies that $H^\mathrm{lf}_n(M,\mathbb{R}) \cong H_{cpct}^{0}(M,\mathbb{R})\cong \mathbb{R}$ and the existence of a canonical fundamental class $[M]\in H^\mathrm{lf}_n(M,\mathbb{R})$. We refer the reader to \cite{Borel-Moore} and \cite{Bredon} for more detailed explanations about locally finite homology.

The simplicial $\ell^1$-seminorm in $C^\mathrm{lf}_*(M,\mathbb{R})$ is defined by setting $\| c\|_1 =\sum_{i=0}^\infty |a_i|$ for a chain $c=\sum_{i=0}^\infty a_i \sigma_i$ in $C^\mathrm{lf}_*(M,\mathbb{R})$. This norm gives rise to a seminorm on the locally finite homology $H^\mathrm{lf}_*(M,\mathbb{R})$ as follows:
$$ \| \alpha \|_1 = \inf_{z} \|z \|_1,$$
where the infimum is taken over all locally finite cycles representing $\alpha \in H^\mathrm{lf}_*(M,\mathbb{R})$. For an oriented $n$-manifold $M$, the simplicial volume $\| M \|_\mathrm{lf}$ of $M$ is defined as the seminorm of its fundamental class $[M]\in H^\mathrm{lf}_n(M,\mathbb{R})$.

Now, let $V$ be a compact $n$-manifold with boundary $\partial V$. For a relative chain $c$ in the relative singular chain complex $C_*(V,\partial V,\mathbb{R})$, the simplicial $\ell^1$-norm is given by the infimum of the $\ell^1$-norms of its representatives. This norm induces a seminorm on the relative singular homology $H_*(V,\partial V,\mathbb{R})$ as in the case of locally finite homology. Then, the relative simplicial volume $\| V,\partial V \|$ is defined as the seminorm of its relative fundamental class $[V,\partial V] \in H_*(V,\partial V,\mathbb{R})$.

Let $M$ be the interior of $V$. Suppose that $M$ is an oriented manifold. We can obtain the inequality $\| V,\partial V \| \leq \| M\|_\mathrm{lf}$ as follows: Consider a compact submanifold $V_0$ of $M$ by removing a collared neighborhood of $\partial V$. It is clear that $V_0$ is homeomorphic to $V$. Let $c=\sum_{i=0}^\infty a_i \sigma_i$ be any locally finite cycle representing the fundamental class $[M] \in H^\mathrm{lf}_n(M,\mathbb{R})$. It is a standard fact that $c|_{V_0}=\sum_{im \sigma_i \cap V_0 \neq \emptyset} a_i \sigma_i$
represents the relative fundamental class $[V, V-V_0] \in H_n(V,V-V_0,\mathbb{R})$.
Because $V-V_0$ is the collared neighborhood of $\partial V$, one can think of $c|_{V_0}$ as a cycle representing
$[V,\partial V] \in H_n(V,\partial V)$. Clearly, we have an inequality
$$ \| c|_{V_0} \|_1 \leq \|c\|_1. $$
Taking the infimum over all locally finite cycles $c$ representing $[M]$, we have
$$ \|V,\partial V \| =\inf_{c'} \|c' \|_1 \leq  \inf_{c} \| c|_{V_0} \|_1 \leq \inf_{c} \|c\|_1 = \|M\|_\mathrm{lf},$$
where $c'$ runs over all relative cycles representing $[V,\partial V]$.

\section{Cohomology with compact support}\label{section: Cohomology with compact support}
\subsection{Singular cohomology}\label{section:Singular cohomology}
Let $M$ be an oriented $n$-manifold.
A singular cochain $f : C_q(M,\br) \rightarrow \br$ is said to be a cochain with compact support if there exists a compact subset $K$ such that $f(\sigma)=0$ whenever $\mathrm{Im}(\sigma)\cap K=\emptyset $.
The cohomology with compact supports of $M$, denoted by $H^*_{cpct}(M,\br)$, is defined as the cohomology of the subcocomplex $C^*_{cpct}(M,\br)$ of cochains with compact support in $C^*(M,\br)$ with the usual coboundary operator $\delta$ on the singular cochain complex. For more details, see \cite[Chapter 3]{Ha02}.

Consider the sup norm with respect to singular simplices, that is, for a cochain $f\in C^q_{cpct}(M,\br)$, set
$$\| f\|_\infty = \sup_{\sigma} |f(\sigma)|,$$
where $\sigma :\Delta^q \rightarrow M $ runs over all singular $q$-simplices.

Noting that $\| \delta(f)\|_\infty \leq (q+2)\cdot \| f\|_\infty$, we can consider the subcocomplex of bounded cochains with compact support
$$C^q_{cpct,b}(M,\br)=\{f\in C^q_{cpct}(M,\br)\mid \| f\|_\infty <+\infty \}$$
which gives rise to the bounded cohomology with compact support, denoted by $H^*_{cpct,b}(M,\br)$. The inclusion of cocomplexes induces a cohomology homomorphism
$$c:H^*_{cpct,b}(M,\br)\longrightarrow H^*_{cpct}(M,\br)$$
which is traditionally named {\it comparison map}.

The sup norm induces a seminorm $\| \cdot \|_\infty$ on cohomology, both in the bounded and unbounded case.
For $\beta \in H^q_{cpct}(M,\br)$, its seminorm is defined by
$$ \|\beta\|_\infty = \inf\{\|b\|_\infty \mid b\in C^q_{cpct}(M,\br), \  \delta b=0, \ [b]=\beta\}$$
where $[b]$ as usual denotes the cohomology class represented by the cocycle $b$.
A seminorm $\| \cdot \|_\infty$ is defined on $H^q_{cpct,b}(M,\br)$ as above.
Note that
$$ \|\beta\|_\infty=\inf\{\|\beta_b\|_\infty \mid \beta_b\in H^q_{cpct,b}(M,\br), \ c(\beta_b)=\beta\}.$$

\subsection{De Rham cohomology}\label{derham}

Let $M$ be a smooth manifold. Let $\Omega_{cpct}^q(M)$ denote the set of all differential $q$--forms on $M$ with compact support. The de Rham cohomology of $M$ with compact support, denoted by $H^*_{dR,cpct}(M,\mathbb{R})$, is defined as the cohomology of the cocomplex $\Omega_{cpct}^*(M)$ with the standard exterior derivative. By \cite{deRham}, the cohomology $H^*_{cpct}(M,\br)$ with compact support can be computed as $H^*_{dR,cpct}(M,\mathbb{R})$.

It is a standard fact that there is no difference between singular and smooth singular cohomology (with compact support).  Hence, we will use smooth singular cohomology with compact support to describe the de Rham isomorphism between de Rham cohomology and singular cohomology with compact support.
To denote smooth singular cohomology with compact support, we use the same notations as in Section \ref{section:Singular cohomology}.
For $\omega \in \Omega_{cpct}^q(M)$ and a smooth $q$-simplex $\sigma : \Delta^q \rightarrow M$, define a map $$I(\omega)(\sigma)=\int_\sigma \omega := \int_{\Delta^q} \sigma^* \omega,$$
and extend this linearly on $C_q(M,\br)$. Then, it can be easily seen that $I : \Omega_{cpct}^*(M) \rightarrow C_{cpct}^*(M,\br)$ is well-defined and a cochain map. Furthermore, this cochain map induces an isomorphism
$I : H^*_{dR,cpct}(M,\br) \rightarrow H^*_{cpct}(M,\mathbb{R})$ in cohomology.
This is known as the de Rham theorem in the case that $M$ is a closed manifold. We refer the reader to \cite[Chapter IV]{Demaily} or \cite[Chapter 16]{Lee} for further details.

Now, suppose that $M$ is a $\mathbb{Q}$-rank $1$ locally symmetric space.
Note that the geodesic straightening map $str : C^\mathrm{lf}_*(M,\mathbb{R}) \rightarrow C^\mathrm{lf}_*(M,\mathbb{R})$ is well-defined and moreover, chain homotopic to the identity \cite{Kim-Kim}.
Hence, it can be seen that if $f$ is a $q$-cochain with compact support, $f\circ str : C_q(M,\br) \rightarrow \br$ is also a cochain with compact support.
This allows us to have a map $C^*_{cpct}(M,\br) \rightarrow C^*_{cpct}(M,\br)$ defined by $f \mapsto f\circ str$. Furthermore, this map induces the identity map in cohomology since the geodesic straightening map $str$ is chain homotopic to the identity. Thus, the de Rham isomorphism $I : H^*_{dR,cpct}(M,\br) \rightarrow H^*_{cpct}(M,\mathbb{R})$ can also be induced, at cochain level, by the map
$$I_{str}(\omega)(\sigma)=I(\omega)(str(\sigma))=\int_{str(\sigma)} \omega.$$
Observe that for arbitrary manifolds with a given straightening, the map $I_{str}$ is not well defined on the compact support cohomology, as the straightening will not map locally finite chains to locally finite chains in general. % Indeed, this map $I_{str}$ works for $\br$-rank $1$ or $\mathbb{Q}$-rank $1$ locally symmetric spaces.

\subsection{Dual norms}

There is a natural pairing
$$\langle \ , \ \rangle:H^q_{cpct}(M,\br)\otimes H^\mathrm{lf}_q(M,\mathbb{R})\longrightarrow \br$$
 between cohomology with compact support and locally finite homology defined for $\beta=[b]\in H^q_{cpct}(M,\br)$ and $\alpha=[z]\in H_q^\mathrm{lf}(M,\mathbb{R})$ as
 $$\langle \beta , \alpha \rangle=\langle [b],[z]\rangle:= b(z).$$

It is straightforward to check that this definition is independent of the choices of cocycles and cycles representing $\beta$ and $\alpha$ respectively. It is now easy to show that
\begin{equation}  | \langle \beta , \alpha \rangle|\leq \| \beta\|_\infty \cdot \| \alpha \|_1,\label{equ:dualityInequalityFORnorm_}\end{equation}
but equality does not hold in general even for $\alpha=[M]$. Examples are given below. In order to get a duality of norms in top dimension, Gromov introduced \cite[page 17]{Gromov} a different norm on cohomology with compact
support as follows : Let $\mathcal{S}_q$ be the set of singular $q$-simplices $\Delta^q \rightarrow M$.
A subset $\Phi$ of $\mathcal{S}_q$ is said to be locally finite if any compact subset of $M$ intersects the image of only finitely many elements of $\Phi$.
Let $\mathcal{S}_q^\mathrm{lf}$ denote the set of all locally finite subsets of $\mathcal{S}_q$.

For a $q$-cochain $b$ with compact support in $C^q_{cpct}(M,\br)$, define a seminorm $\| b \|_\Phi$ by setting $$\| b \|_\Phi = \sup_{\sigma \in \Phi} |b(\sigma)|,$$
for each $\Phi \in \mathcal{S}^\mathrm{lf}_q$. Subsequently, obtain a seminorm $\| \cdot \|_\Phi$ on $H^q_{cpct}(M,\br)$ and take the supremum of these seminorms over all locally finite subsets of $\mathcal{S}_q$.
Then, we have a new seminorm $\| \cdot \|^\infty$ on $H^q_{cpct}(M,\br)$ defined by
$$\|\beta \| ^\infty = \sup_{\Phi \in \mathcal{S}^\mathrm{lf}_q} \| \beta \|_\Phi=\sup_{\Phi \in \mathcal{S}^\mathrm{lf}_q}\inf_{\beta=[b]}\sup_{\sigma\in \Phi}|b(\sigma)|.$$

Observe that for any $\Phi \in \mathcal{S}^\mathrm{lf}_q$, any $\sigma\in \Phi$ and $b\in C^q_{cpct}(M,\br)$, we have
$$ \sup_{\sigma\in\Phi}|b(\sigma)|\leq \|b\|_\infty.$$
Hence it follows that
\begin{equation}
\left\Vert
\beta\right\Vert ^{\infty}\leq\left\Vert \beta\right\Vert
_{\infty}.\label{equ:norm^LEQnorm_}
\end{equation}

In general the above inequality is not an equality. Take $V=[0,1]$
and $M=(0,1)$ with a standard Euclidean metric. Let $\omega_M\in
H^1_{cpct}(M)$ be the unique cohomology class with $\langle
\omega_M, [M]\rangle=1$. It is not difficult to show that
$\|M \|_\mathrm{lf}=\infty$, whereas $\| V,\partial V \| =1<\infty$. In view of
Theorems \ref{thm:dualityFORopen} and
\ref{thm:dualityFORrelative}, $||\omega_M||^\infty=0$ and
$||\omega_M||_\infty>0$ for $M=(0,1)$.

\begin{lemma} \label{Lemma:dualityInequalityFORnorm^} Let $\beta \in H^q_{cpct}(M,\br)$ and $\alpha \in H_q^\mathrm{lf}(M,\mathbb{R})$. Then
 $$| \langle \beta , \alpha \rangle |\leq \|\beta\|^\infty \cdot \|\alpha \|_1. $$
\end{lemma}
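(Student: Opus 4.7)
The plan is to use the explicit definitions: unwind $\|\alpha\|_1$ as an infimum over locally finite cycles $z$ representing $\alpha$, unwind $\|\beta\|^\infty$ through a well chosen locally finite subset $\Phi\subset\mathcal{S}_q$, and then estimate the pairing $\langle\beta,\alpha\rangle = b(z)$ using the same $\Phi$. The upshot is that the natural choice of $\Phi$ will be the set of simplices actually appearing in a chosen representative $z$ of $\alpha$, and this choice forces the estimate through.

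More precisely, let $z=\sum_{i=0}^\infty a_i\sigma_i$ be any locally finite cycle representing $\alpha$, and let $\Phi_z=\{\sigma_i\mid a_i\neq 0\}$. The first key observation is that $\Phi_z\in\mathcal{S}_q^{\mathrm{lf}}$, since local finiteness of $z$ is by definition local finiteness of the family $\{\sigma_i\}$. Next, for any cocycle $b\in C^q_{cpct}(M,\br)$ with $[b]=\beta$, the compact support of $b$ meets only finitely many $\sigma_i$, so the sum $b(z)=\sum_i a_i\, b(\sigma_i)$ is in fact a finite sum; since $b$ is a cocycle and $z$ is a cycle, the value $b(z)$ depends only on $\beta$ and $\alpha$ and equals $\langle\beta,\alpha\rangle$. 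Thus
\begin{equation*}
|\langle\beta,\alpha\rangle|=|b(z)|\le\sum_i|a_i|\cdot|b(\sigma_i)|\le\|z\|_1\cdot\sup_{\sigma\in\Phi_z}|b(\sigma)|.
\end{equation*}

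Now I would take the infimum of the right hand side over all cocycles $b$ representing $\beta$ (the left hand side being independent of this choice), obtaining
\begin{equation*}
|\langle\beta,\alpha\rangle|\le\|z\|_1\cdot\|\beta\|_{\Phi_z}\le\|z\|_1\cdot\|\beta\|^\infty,
\end{equation*}
where the last inequality uses the very definition $\|\beta\|^\infty=\sup_{\Phi\in\mathcal{S}_q^{\mathrm{lf}}}\|\beta\|_\Phi$ together with $\Phi_z\in\mathcal{S}_q^{\mathrm{lf}}$. Finally, taking the infimum over all locally finite cycles $z$ representing $\alpha$ yields $|\langle\beta,\alpha\rangle|\le\|\beta\|^\infty\cdot\|\alpha\|_1$.

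The only subtle point, and the reason the norm $\|\cdot\|^\infty$ works whereas $\|\cdot\|_\infty$ does not, is the tight coupling between the locally finite chain $z$ and the locally finite family $\Phi_z$: the inequality $\sum|a_i||b(\sigma_i)|\le\|z\|_1\sup_{\Phi_z}|b|$ uses \emph{only} the values of $b$ on simplices that actually show up in $z$, and this is exactly the freedom that the definition of $\|\cdot\|^\infty$ grants when we pass from $b$ to its cohomology class. No other step is delicate: the finiteness of the sum $b(z)$ is automatic from compact support of $b$ against local finiteness of $z$, and the passage from cocycles to classes is the standard observation that $b(z)$ is invariant under coboundary perturbations of $b$ when $\partial z=0$.
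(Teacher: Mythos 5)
Your proof is correct and follows essentially the same route as the paper's: pick a locally finite representative $z$ of $\alpha$, take $\Phi$ to be the (locally finite) set of simplices occurring in $z$, bound $|b(z)|$ by $\|z\|_1\cdot\|b\|_\Phi$, and then pass to $\|\beta\|_\Phi\le\|\beta\|^\infty$ before taking the infimum over $z$. Your remarks on the finiteness of $b(z)$ and on why the $\Phi$-localized norm is exactly what makes the estimate work are accurate, if slightly more detailed than the paper's version.
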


\begin{proof}
Let $f$ be a representative of $\beta$ and $z=\sum_{\sigma \in \Phi} a_\sigma \sigma$ be a representative of $\alpha$. Then,
$$ | \langle \beta , \alpha \rangle |
= | \langle f , z \rangle |
= \Big|\sum_{\sigma \in \Phi} a_\sigma \cdot f(\sigma) \Big|
\leq \| f \|_\Phi \cdot \sum_{\sigma \in \Phi} | a_\sigma | = \| f \|_\Phi \cdot \| z \|_1.$$
Taking the infimum over all representatives of $\beta$ and then taking the supremum over all $\Phi$,
$$ | \langle \beta , \alpha \rangle |
\leq \| \beta \|_\Phi \cdot \| z \|_1
\leq \| \beta \|^\infty \cdot \| z \|_1.$$
The desired inequality follows by taking the infimum over all representatives of $\alpha$.
\end{proof}

Note that in view of (\ref{equ:norm^LEQnorm_}) the inequality (\ref{equ:dualityInequalityFORnorm_}) immediately follows from the lemma. Gromov \cite{Gromov} showed that the simplicial volume of a non-compact manifold can be computed in terms of this norm $\| \cdot \|^\infty$ of the dual class
in cohomology with compact support as follows:

\begin{thm}\label{thm:dualityFORopen} Let $M$ be an oriented $n$-manifold, and let $0\neq \beta\in H^n_{cpct}(M,\br)$. Then,
 $$\frac{| \langle \beta , [M] \rangle |}{\|\beta\|^\infty}= \|M\|_\mathrm{lf}. $$
\end{thm}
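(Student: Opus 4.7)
My plan is to establish the two inequalities separately. The inequality $\|M\|_\mathrm{lf}\geq|\langle\beta,[M]\rangle|/\|\beta\|^\infty$ is immediate from Lemma~\ref{Lemma:dualityInequalityFORnorm^} applied to the fundamental class $\alpha=[M]$: this yields $|\langle\beta,[M]\rangle|\leq\|\beta\|^\infty\cdot\|M\|_\mathrm{lf}$, which rearranges to the claim, with the standard conventions handling the degenerate cases $\|\beta\|^\infty\in\{0,+\infty\}$.

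The reverse inequality requires a Hahn--Banach duality argument. Reducing to the connected case, Poincar\'e duality identifies $H^n_{cpct}(M,\br)\cong\br$ via the pairing against $[M]$, so $\beta\neq 0$ forces $\langle\beta,[M]\rangle\neq 0$; after normalizing $\langle\beta,[M]\rangle=1$ the goal becomes $\|M\|_\mathrm{lf}\cdot\|\beta\|^\infty\leq 1$. For each $\Phi\in\mathcal{S}_n^\mathrm{lf}$ introduce the restricted quantity
\[
\|M\|_\Phi:=\inf\bigl\{\|z\|_1\mid z\in\ell^1(\Phi),\ z\text{ is a cycle representing }[M]\bigr\},
\]
with the convention $\|M\|_\Phi=+\infty$ if the infimum is over the empty set. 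Every finite-$\ell^1$-norm representative of $[M]$ is supported on some locally finite $\Phi$, so $\|M\|_\mathrm{lf}=\inf_\Phi\|M\|_\Phi$, and it suffices to prove the dual bound $\|\beta\|_\Phi\leq 1/\|M\|_\Phi$ for every $\Phi$; taking the supremum over $\Phi$ then yields $\|\beta\|^\infty\leq 1/\|M\|_\mathrm{lf}$, the required inequality.

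Fix $\Phi$ with $\|M\|_\Phi<\infty$ and $\epsilon>0$ (the easier case $\|M\|_\Phi=+\infty$ is handled separately by checking $\|\beta\|_\Phi=0$ directly). Applying Hahn--Banach to the affine subspace $z_0+W_\Phi$ of cycles representing $[M]$ inside the Banach space $\ell^1(\Phi)$ with dual $\ell^\infty(\Phi)$, where $W_\Phi$ denotes the closed subspace of $\ell^1$-summable boundaries supported on $\Phi$, produces a bounded function $\widehat{b}\colon\Phi\to\br$ vanishing on $W_\Phi$, with $\widehat{b}(z_0)=1$ and $\|\widehat{b}\|_\infty\leq 1/\|M\|_\Phi+\epsilon$. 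The main obstacle is to extend $\widehat{b}$ to an honest compact-support singular cocycle $b\in C^n_{cpct}(M,\br)$ representing $\beta$ with $\sup_{\sigma\in\Phi}|b(\sigma)|\leq\|\widehat{b}\|_\infty$, since the naive extension of $\widehat b$ by zero outside $\Phi$ is typically neither compactly supported (the union $\bigcup_{\sigma\in\Phi}\mathrm{Im}(\sigma)$ need not be precompact) nor a global cocycle. I would carry out the extension via a compact exhaustion $K_1\subset K_2\subset\cdots$ of $M$: local finiteness of $\Phi$ makes each subfamily $\Phi_k=\{\sigma\in\Phi:\mathrm{Im}(\sigma)\subset K_k\}$ finite, so the relevant cocycle-extension problem over $K_k$ is finite-dimensional and solvable, and the one-dimensionality of $H^n_{cpct}(M,\br)$ allows one to reconcile a coherent choice of local cocycles $b_k$ into a global compact-support cocycle $b$ representing $\beta$ by absorbing any mismatch into compact-support coboundaries that leave the sup-norm bound on $\Phi$ intact. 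Letting $\epsilon\to 0$ then gives $\|\beta\|_\Phi\leq 1/\|M\|_\Phi$, and the theorem follows.
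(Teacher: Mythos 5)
The paper does not actually prove this statement: it is Gromov's duality principle for the locally finite simplicial volume, and the text explicitly defers to L\"oh's thesis \cite{Loeh} for the proof, so the only possible comparison is with that reference. Your first inequality is exactly Lemma \ref{Lemma:dualityInequalityFORnorm^} applied to $\alpha=[M]$ and is fine, as is the reduction of the converse to the per-$\Phi$ bound $\|\beta\|_\Phi\leq 1/\|M\|_\Phi$.

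The genuine gap is in the step you yourself flag as ``the main obstacle,'' and it is not a technicality that a compact exhaustion repairs. Hahn--Banach hands you a bounded functional $\widehat{b}$ on $\ell^1(\Phi)$ vanishing on boundaries supported on $\Phi$; to conclude you need an honest cocycle $b\in C^n_{cpct}(M,\br)$ with $[b]=\beta$ and $\sup_{\sigma\in\Phi}|b(\sigma)|$ controlled by $\|\widehat{b}\|_\infty$. A cocycle must vanish on every $\partial\tau$, and a single $\partial\tau$ generically involves simplices both inside and outside $\Phi$, so the values of $b$ on $\Phi$ are coupled to its values elsewhere in a way $\widehat{b}$ knows nothing about; moreover Hahn--Banach extension gives no control on supports. (In the closed case the extension is automatically a cocycle because one extends a functional vanishing on all of $B_n(M)$; but for open $M$ one has $H^n(M,\br)=0$, so an extension without compact support represents $0$, not $\beta$ --- the compact support condition is the whole point.) Your proposed repair, ``absorbing any mismatch into compact-support coboundaries that leave the sup-norm bound on $\Phi$ intact,'' is unjustified: adding $\delta g$ changes $b(\sigma)$ by $g(\partial\sigma)$, which can be arbitrarily large on elements of $\Phi$. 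Two further unaddressed points: the case $\|M\|_\Phi=+\infty$, which you dismiss as easy, is the entire content of the theorem whenever $\|M\|_\mathrm{lf}=\infty$ (e.g.\ $M=(0,1)$, where the paper invokes precisely this theorem to get $\|\omega_M\|^\infty=0$); and the identification of $W_\Phi$ --- boundaries of locally finite chains, of $\ell^1$-chains, or their closures --- affects both the computation of $d(z_0,W_\Phi)$ and whether $z_0+w$ still represents $[M]$, which is exactly where the nontrivial comparison between locally finite and $\ell^1$-homology enters L\"oh's argument. As written, the hard inequality is not proved.
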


The original sup norm $\| \beta \|_\infty$ is still of interest, as it turns out to be dual to the $\ell^1$-norm of manifolds $V$ with boundary $\partial V$, in the case where $M$ is the interior of $V$.

\begin{thm}\label{thm:dualityFORrelative} Let $V$ be a compact, oriented $n$-manifold with boundary $\partial V$,
and let $M$ denote its interior. Let $0\neq \beta\in
H^n_{cpct}(M,\br)$. Then,
 $$\frac{| \langle \beta , [M] \rangle |}{\|\beta\|_\infty}= \|V,\partial V \|. $$
 \end{thm}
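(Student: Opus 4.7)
The plan is to reduce the statement to the standard Hahn--Banach duality for the compact pair $(V,\partial V)$ by constructing a natural isometric isomorphism
$$r\colon(H^n_{cpct}(M,\br),\|\cdot\|_\infty)\xrightarrow{\ \cong\ }(H^n(V,\partial V;\br),\|\cdot\|_\infty)$$
that is compatible with the fundamental classes, in the sense that $\langle r(\beta),[V,\partial V]\rangle=\langle\beta,[M]\rangle$. Granting this, the classical Hahn--Banach duality applied to the relative fundamental class $[V,\partial V]\in H_n(V,\partial V;\br)$ with the $\ell^1$-seminorm yields, for every nonzero $[\alpha]\in H^n(V,\partial V;\br)$,
$$\|V,\partial V\|=\frac{|\langle[\alpha],[V,\partial V]\rangle|}{\|[\alpha]\|_\infty},$$
since $H^n(V,\partial V;\br)$ is finite dimensional and one-dimensional on each connected component of $V$. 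Applying this to $[\alpha]=r(\beta)$ then gives the theorem.

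To construct $r$ I would fix a compact codimension-zero submanifold $V_0\subset M$ obtained by deleting an open collar of $\partial V$, so that $V_0\cong V$, together with a retraction $\pi\colon M\to V_0$ which is the identity on $V_0$ and projects the collar $M\setminus V_0$ onto $\partial V_0$. The map $\pi$ is proper and, by the standard slide along the collar, homotopic in $M$ to $\mathrm{id}_M$. After possibly enlarging $V_0$, every $\beta\in H^n_{cpct}(M,\br)$ has a representative cocycle $b$ with compact support $K\subset\mathrm{int}(V_0)$; the restriction $b|_{V_0}$ then vanishes on every simplex with image in $\partial V_0$, hence defines a relative cocycle in $C^n(V_0,\partial V_0;\br)$, and I set $r(\beta)=[b|_{V_0}]$. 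In the reverse direction, for $f\in C^n(V_0,\partial V_0;\br)$ the pullback $f\circ\pi_*$ is a cochain on $M$ supported in $V_0$, since $\pi(M\setminus V_0)\subset\partial V_0$ where $f$ vanishes; this defines a map $s$ at the cochain level.

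Both $r$ and $s$ are manifestly norm non-increasing. The composition $r\circ s$ is the identity already at the cochain level because $\pi|_{V_0}=\mathrm{id}$. For $s\circ r$, the homotopy between $\mathrm{id}_M$ and $\pi$ produces a prism operator $T$ with $\delta T^*b=\pi^*b-b$, and the key point is that $T^*b$ remains compactly supported when $b$ has support in $\mathrm{int}(V_0)$: the prism of any simplex disjoint from $V_0$ stays in the collar, disjoint from $K$, where $b$ vanishes. Consequently $r$ and $s$ are mutually inverse isometric isomorphisms. Pairing compatibility then follows from the fact already recorded in Section~\ref{section: Locally finite and relative homology} that any locally finite cycle $c$ representing $[M]$ has $c|_{V_0}$ representing $[V,\partial V]$: the equality $b(c)=b(c|_{V_0})$ is immediate from the support of $b$, and one more application of the prism argument, using that $\partial(c|_{V_0})$ lies outside $V_0$, yields $b(c|_{V_0})=(b|_{V_0})(\pi_*(c|_{V_0}))=\langle r(\beta),[V,\partial V]\rangle$.

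The main technical obstacle I foresee is the bookkeeping of compact supports through the prism operator $T^*$; properness of $\pi$ together with the support condition $K\subset\mathrm{int}(V_0)$ make this automatic, and apart from this the argument is the standard Hahn--Banach duality transposed to the compact-support setting.
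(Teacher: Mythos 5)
The paper itself offers no proof of this statement: it is quoted from L\"oh's thesis, so there is nothing internal to compare against. Your reduction to the classical duality for the compact pair $(V,\partial V)$ via a collar retraction is the natural route and essentially the standard one, and most of the bookkeeping you describe is sound: the compact support of $T^*b$ (a simplex disjoint from $V_0$ has its prism contained in the closed collar, hence disjoint from $K\subset\mathrm{int}(V_0)$), the identity $r\circ s=\mathrm{id}$ on cochains, and the pairing compatibility $b(c)=b(c|_{V_0})=(b|_{V_0})(\pi_*(c|_{V_0}))$ all check out.

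There is, however, one genuine soft spot: the claim that $r$ is ``manifestly norm non-increasing''. At the cochain level $r$ is only defined on cocycles supported in $\mathrm{int}(V_0)$, whereas $\|\beta\|_\infty$ is an infimum over \emph{all} compactly supported representatives, whose supports need not lie in the fixed $V_0$. For such a representative $b'$ with support $K'\not\subset \mathrm{int}(V_0)$, neither $b'|_{V_0}$ (not a relative cochain on $(V_0,\partial V_0)$) nor $(\iota\circ\pi)^*b'$ (not compactly supported in general: if $K'$ meets $\partial V_0$, simplices escaping to the end of the collar can still evaluate nontrivially after projection) is usable. So what you actually get for free is only $\|r(\beta)\|_\infty\leq\inf\{\|b\|_\infty : [b]=\beta,\ \mathrm{supp}(b)\subset\mathrm{int}(V_0)\}$, which is a priori larger than $\|\beta\|_\infty$; and this is precisely the inequality feeding the direction $|\langle\beta,[M]\rangle|/\|\beta\|_\infty\leq\|V,\partial V\|$, so it cannot be waved away. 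The repair is not hard: given an arbitrary representative $b'$, choose a larger core $V_1\supset V_0$ with $\mathrm{supp}(b')\subset\mathrm{int}(V_1)$ and run your construction for $V_1$, obtaining a relative cocycle on $(V_1,\partial V_1)$ of norm at most $\|b'\|_\infty$; then observe that the orientation-preserving homeomorphism of pairs $(V_1,\partial V_1)\cong(V_0,\partial V_0)$ is an isometry for $\|\cdot\|_\infty$ and matches the two relative classes, since both pair to $\langle\beta,[M]\rangle$ against the respective fundamental classes and the top relative cohomology of a connected pair is one-dimensional. With this independence-of-$V_0$ statement added, your argument closes.
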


For detailed proofs of Theorem \ref{thm:dualityFORopen} and \ref{thm:dualityFORrelative},
we refer the reader to \cite{Loeh}.
Observe that Theorems \ref{thm:dualityFORopen} and \ref{thm:dualityFORrelative}, together with the simple inequality (\ref{equ:norm^LEQnorm_}) immediately reproves the inequality
 \[ \left\Vert V,\partial V\right\Vert \leq\left\Vert
M\right\Vert_\mathrm{lf} ,\]
when $M$ is the interior of $V$.

Let $\omega_M\in H^n_{cpct}(M,\br)$ denote the unique cohomology class defined by $\langle \omega_M,[M]\rangle=\mathrm{Vol}(M)$. In view of Theorems \ref{thm:dualityFORopen} and \ref{thm:dualityFORrelative}, our proportionality principles in Theorem \ref{thm:PropPrincLocFin}  will immediately follow from:
\begin{Prop}\label{mainProp}There is an equality of norms
$$ \| \omega_M \|_\infty = \| \omega_M \|^\infty = \| \omega_{\widetilde{M}}\|_\infty.$$
\end{Prop}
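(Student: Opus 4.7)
The plan is to sandwich the three quantities into the chain
\[
\|\omega_{\widetilde{M}}\|_\infty \;\leq\; \|\omega_M\|^\infty \;\leq\; \|\omega_M\|_\infty \;\leq\; \|\omega_{\widetilde{M}}\|_\infty,
\]
from which Proposition \ref{mainProp} follows at once. The middle inequality is exactly the trivial bound (\ref{equ:norm^LEQnorm_}), so only the two outer inequalities require genuine work, and I intend to establish each by an independent construction, following the roadmap announced in the introduction.

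\textbf{Transfer step, giving} $\|\omega_{\widetilde{M}}\|_\infty \leq \|\omega_M\|^\infty$. I would construct a bounded transfer $trans_b : H^n_{cpct,b}(M) \to H^n_{c,b}(G,\br)$ fitting into a commutative square with an unbounded analogue $trans:H^n_{cpct}(M)\to H^n_c(G,\br)$, and verify that the latter coincides with the de Rham transfer (integration over a fundamental domain for the $\Gamma$-action on $\widetilde{M}$). This would guarantee $trans(\omega_M)=\omega_{\widetilde{M}}$. The key estimate is that, for any bounded representative $b$ of $\omega_M$, the cocycle $trans_b(b)$ can be bounded by $\|b\|_\Phi$ for a locally finite family $\Phi \in \mathcal{S}^\mathrm{lf}_n$ consisting of simplices extracted from the fundamental domain. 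Combining
\[
\|\omega_{\widetilde{M}}\|_\infty \leq \|trans_b(b)\|_\infty \leq \|b\|_\Phi
\]
with the infimum over $b$ representing $\omega_M$ and the supremum over $\Phi$ then yields $\|\omega_{\widetilde{M}}\|_\infty\leq \|\omega_M\|^\infty$.

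\textbf{Cusp map step, giving} $\|\omega_M\|_\infty \leq \|\omega_{\widetilde{M}}\|_\infty$. I would use a cusp map to produce a norm non-increasing cohomology map $f^*: H^*_b(\pi_1(M)) \to H^*_{cpct,b}(M)$, sending the restriction of $\omega_{\widetilde{M}} \in H^n_{c,b}(G,\br)$ to $H^n_b(\pi_1(M))$ to the class $\omega_M \in H^n_{cpct,b}(M)$. Concretely, take a continuous bounded representative $\Theta$ of $\omega_{\widetilde{M}}$ with $\|\Theta\|_\infty$ close to $\|\omega_{\widetilde{M}}\|_\infty$, restrict it to $\Gamma=\pi_1(M)$ to obtain a bounded $\Gamma$-cocycle of norm at most $\|\Theta\|_\infty$, and apply $f^*$ to land in $C^n_{cpct,b}(M)$. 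Once one checks that the resulting class equals $\omega_M$ (for instance by pairing against $[M]_\mathrm{lf}$ to recover $\mathrm{Vol}(M)$, using the commutativity of the transfer square on the unbounded level), the conclusion $\|\omega_M\|_\infty \leq \|\omega_{\widetilde{M}}\|_\infty$ follows by letting $\|\Theta\|_\infty\searrow \|\omega_{\widetilde{M}}\|_\infty$.

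\textbf{Main obstacle.} The transfer step is essentially a controlled bookkeeping exercise: the only delicate point is that the sum over the $\Gamma$-orbit defining $trans_b$ must be locally finite at the cochain level, which is precisely what the $\|\cdot\|^\infty$-norm and the choice of $\Phi$ are designed to track. The harder step will be the cochain-level construction of $f^*$. The naive candidate — pullback along a classifying map $M \to B\pi_1(M)$ — does not land in compact-support cochains, since such pullbacks have no reason to vanish outside a compact set. One must therefore modify pullbacks by a "coning" procedure supported in neighborhoods of the ends of $M$, converting a $\Gamma$-invariant cochain into one with compact support without increasing the sup norm. Exhibiting such a coning, and verifying both that it is norm non-increasing and that the resulting cocycle is cohomologous to $\omega_M$, is the technical heart of the argument and where the geometry of the ends of $M$ enters the proof.
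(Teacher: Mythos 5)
Your overall architecture coincides with the paper's: the chain $\|\omega_{\widetilde{M}}\|_\infty\leq\|\omega_M\|^\infty\leq\|\omega_M\|_\infty\leq\|\omega_{\widetilde{M}}\|_\infty$, with the middle inequality being (\ref{equ:norm^LEQnorm_}), the first obtained from a transfer map satisfying $trans(\omega_M)=\omega_{\widetilde{M}}$, and the last from a norm non-increasing cusp map $f^*:H^n_b(\Gamma)\to H^n_{cpct,b}(M)$ built from alternating cochains on $G/P$. Your cusp-map half correctly isolates the real difficulty (a naive pullback has no compact support; one must use a boundary map constant on horoballs together with alternating cochains so that degenerate tuples evaluate to zero), and your proposed verification that $f^*$ hits $\omega_M$ by pairing with $[M]_{\mathrm{lf}}$ is a workable alternative to the paper's argument, which instead proves $trans_b\circ f^*=trans_\Gamma$ and exploits the injectivity of $trans$ in top degree.

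The transfer half, however, has a genuine gap as stated. Your key estimate is $\|\omega_{\widetilde{M}}\|_\infty\leq\|trans_b(b)\|_\infty\leq\|b\|_\Phi$ for a single locally finite family $\Phi$. The second inequality cannot hold: $\|trans_b(b)\|_\infty$ is a supremum over \emph{all} $(n+1)$-tuples of points of $\widetilde{M}$, and bounding the value at a given tuple $(x_0,\dots,x_n)$ requires evaluating $b$ on the family $\{\pi_*\,str(p(gx_0),\dots,p(gx_n))\mid g\in D\}$; each such orbit family is locally finite, but the union over all tuples is essentially all of $\mathcal{S}_n$ and is not locally finite, so no single $\Phi\in\mathcal{S}^{\mathrm{lf}}_n$ controls $\|trans_b(b)\|_\infty$. (There is also a quantifier issue: if $\Phi$ were allowed to depend on $b$ you would only reach $\inf_b\sup_\Phi\|b\|_\Phi$, which dominates $\|\omega_M\|^\infty=\sup_\Phi\inf_b\|b\|_\Phi$ rather than being dominated by it.) The missing ingredient, which the paper supplies, is a reduction of the seminorm on $H^n_{c,b}(G)$ to a supremum over \emph{finite} families of simplices: restriction to a cocompact torsion-free lattice $\Lambda<G$ is isometric (it admits a transfer as left inverse), and on the compact quotient $\Lambda\backslash\widetilde{M}$ locally finite families are finite. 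For each fixed finite $\Phi$ the associated orbit family $\Phi_p$ is then genuinely locally finite and independent of $b$, and the quantifiers line up to give $\|trans(\omega_M)\|_\infty\leq\sup_{\Phi\ \mathrm{finite}}\inf_{[b]=\omega_M}\|b\|_{\Phi_p}\leq\|\omega_M\|^\infty$. Without this (or an equivalent device) your transfer step does not close.
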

Given the inequality $ \| \omega_M \|_\infty \geq  \| \omega_M \|^\infty $ established above (\ref{equ:norm^LEQnorm_}), to prove the proposition, and hence Theorem  \ref{thm:PropPrincLocFin} it suffices to prove the inequalities
\begin{itemize}
\item $  \| \omega_M \|^\infty \geq \| \omega_{\widetilde{M}}\|_\infty$; this will be proven at the end of Section \ref{Section: Transfer},
\item $ \| \omega_M \|_\infty \leq \| \omega_{\widetilde{M}}\|_\infty$; this will be proven at the end of Section \ref{Section: coning}.
\end{itemize}

\section{Continuous cohomology}\label{section: Cont coho}

Let $G$ be a topological group. Recall that the continuous cohomology $H^*_c(G,\mathbb{R})$ of $G$ (with trivial $\mathbb{R}$ coefficients) is the cohomology of the cocomplex
$$C^{q+1}(G,\mathbb{R})=\{ f: G^{q+1}\rightarrow \mathbb{R} \mid f \mathrm{ \ continuous \,and \,  } G\mathrm{-invariant} \},$$
endowed with its homogeneous coboundary operator $\delta:C^{q+1}(G,\mathbb{R})\rightarrow C^{q+2}(G,\mathbb{R})$. The subcocomplex of bounded functions
$$C^{q+1}_b(G,\mathbb{R})=\{ f\in C^{q+1}(G,\mathbb{R}) \mid \| f \|_\infty = \sup_{g_0,...,g_q\in G} |f(g_0,...,g_q)|<+\infty   \}$$
leads to the continuous bounded cohomology $H^q_{c,b}(G,\mathbb{R})$ of $G$. The inclusion of cocomplexes $C^{q+1}_b(G,\mathbb{R})\subset C^{q+1}(G,\mathbb{R})$ induces a cohomology map $H^q_{c,b}(G,\mathbb{R})\rightarrow H^q_c(G,\mathbb{R})$. The sup norm defines a cohomology seminorm as
$$\| \beta \|_\infty = \inf \{ \| f\|_\infty \mid [f]=\beta, \ f\in C^{q+1}_b(G,\mathbb{R})\},$$
for $\beta $ in $H^q_c(G,\mathbb{R})$ or $H^q_{c,b}(G,\mathbb{R})$. Note that in the case where $\beta\in H^q_c(G,\mathbb{R})$ cannot be represented by a bounded cocycle, the infimum over the empty set is defined as $\| \beta \|_\infty = +\infty $.

If the group $G$ is endowed with the discrete topology, then the continuity condition is void and we remove the subscript ``c" from the notation. Note that one then recovers the Eilenberg-MacLane group cohomology.

Many different cocomplexes can be used to compute these cohomology groups. An important example is the following: Let $K$ be a maximal compact subgroup of $G$. The cohomology of the cocomplex of $G$-invariant continuous functions on the product of $q+1$ copies of $G/K$,
$$C((G/K)^{q+1},\mathbb{R})^G=\{ f: (G/K)^{q+1}\rightarrow \mathbb{R} \mid f \mathrm{ \ continuous \, and \,  } G \mathrm{-invariant} \},$$
endowed with its homogeneous coboundary operator is isomorphic to the continuous cohomology of $G$ \cite[Ch. III, Prop. 2.3]{Guichardet}. Similarly, the subcocomplex of bounded functions on $(G/K)^{q+1}$,
$$C_b((G/K)^{q+1},\mathbb{R})^G=\{ f\in C((G/K)^{q+1},\mathbb{R})^G  \mid \| f \|_\infty <+\infty  \},$$
computes the continuous bounded cohomology of $G$ \cite[Cor. 7.4.10]{Monod}. Furthermore, the sup norm on these cocomplexes induces a seminorm on cohomology which agrees with the above defined seminorm \cite[Cor. 7.4.10]{Monod}.

For Theorem \ref{thm:1.4} we will need also nontrivial coefficients, namely we will consider actions of $G$ on $\br$ by multiplication by $\pm 1$ given by a homomorphism $G\rightarrow \mathbb Z /2\mathbb Z$ (see the beginning of Section \ref{section: product of rank 1}). The corresponding cohomology groups are obtained by replacing $G$-invariant by $G$-equivariant cochains in the various cochain complexes.

Let now $G$ be a Lie group and $\Gamma <G$ a lattice. The inclusion $\Gamma <G$ induces restriction maps $C^{q+1}(G,\mathbb{R})\rightarrow C^{q+1}(\Gamma,\mathbb{R})$ and $C((G/K)^{q+1},\mathbb{R})^G\subset C((G/K)^{q+1},\mathbb{R})^\Gamma$ which in turn induces cohomology maps
$$H^*_{c}(G,\mathbb{R})\rightarrow H^*(\Gamma,\mathbb{R}) \mathrm{ \ and \ } H^*_{c,b}(G,\mathbb{R})\rightarrow H^*_b(\Gamma,\mathbb{R}).$$
If $\Gamma$ is cocompact, both maps admit a norm decreasing left inverse, while if $\Gamma$ is not cocompact, only the bounded cohomology map admits a norm decreasing left inverse. The left inverse is given by a transfer map
$$trans_\Gamma:C((G/K)^{q+1},\mathbb{R})^\Gamma \rightarrow C((G/K)^{q+1},\mathbb{R})^G $$
which is, for $c\in  C((G/K)^{q+1},\mathbb{R})^\Gamma $ and $g_0K,...,g_qK\in G/K$ defined as follows:
$$  trans_\Gamma(c)(g_0K,...,g_qK)=\int_{g\in D } c(gg_0K,...,gg_qK)d\mu(g),$$
where $D$ is any fundamental domain for $\Gamma \setminus G$ and the Haar measure $\mu$ is normalized so that $\mu(D)=1$. Note that $ trans_\Gamma(c)$ is finite if either $c$ is a bounded cochain or if the lattice $\Gamma <G$ is cocompact. As the transfer map commutes with the coboundary operator, it induces in these cases cohomology maps which are indeed left inverse to the restriction map. Finally observe that the transfer map does not increase seminorms.

\section{Transfer maps}\label{Section: Transfer}

\subsection{Transfer on de Rham cohomology}\label{deRham}

It is easy to define a transfer map $H^*_{cpct}(M)\rightarrow H^*_c(G)$ through the de Rham cohomology with compact support and the Van Est isomorphism. Indeed, at the cochain level, one defines
$$trans_{dR}:\Omega^q(G/K)_{cpct}^\Gamma \longrightarrow \Omega^q(G/K)^G$$
by sending the differential $q$-form $\alpha\in \Omega^q(G/K)_{cpct}^\Gamma$ to the form $\int_{g\in D} g^*\alpha d\mu(g)$, or more precisely to the form defined, for $x\in G/K$ and $V_1,...,V_q\in T_x(G/K)$, by
$$trans_{dR}(\alpha)_x(V_1,...,V_q)=\int_{g\in D} \alpha_{gx}(g_*(V_1),...,g_*(V_q)) d\mu(g).$$
As above, $D$ is any fundamental domain for $\Gamma\setminus G$ and the Haar measure $\mu$ is normalized so that the measure of $D$ is $1$. It is easy to check that this definition is independent of the fundamental domain $D$ and that the resulting differential form $trans_{dR}(\alpha)$ is $G$-invariant. Furthermore, the transfer map clearly commutes with the differential operator, and hence induces a cohomology map
$$trans_{dR}:H^*_{dR,cpct}(M)\longrightarrow  H^*(\Omega^*(G/K)^G)\cong \Omega^*(G/K)^G\cong H^*_c(G).$$
Naturally, we now want to understand the transfer map on singular cohomology and in particular on bounded singular cohomology, or in other words, to find transfer maps so that the following diagram commutes,
\begin{equation}
\begin{CD}  \label{equ: CD with trans}
  H^q_{cpct, b}(M)    @>trans_b>>          H^q_{c,b}(G,\br) \\
                      @VV c V                             @VV c V \\
  H^q_{cpct}(M) @>trans>>  H^q_{c}(G,\br)   \\
     @AA \cong A                             @AA \cong A \\
     H^q_{dR,cpct}(M) @>trans_{dR}>>  \Omega^q(G/K)^G.
   \end{CD}
   \end{equation}

This is the purpose of the next subsection. Note that the existence of these transfer maps would be easy to show for arbitrary manifolds, given that the singular cohomology with compact support can be computed on measurable cochains. We prefer to restrict to $\mathbb{Q}$-rank $1$ manifolds where the proof is straightforward.

\subsection{Transfer on singular (bounded) cohomology}\label{subsec: def of p}

As usual, when defining a transfer map, we would like to integrate the evaluation of a cochain $c\in C^q_{cpct}(M)$ on translates $g\cdot \sigma$ of a singular simplex $\sigma$ over a fundamental domain $g\in D$ for $\Gamma\setminus G$. This is only possible if the cochain presents certain regularity properties. However, at this point, the cochain $c$ is completely arbitrary. We will thus start by replacing $c$ by a better behaved cochain.

The $\mathbb{Q}$-rank $1$ locally symmetric space $M$ admits the following description \cite{Leuzinger}. There is an exhaustion function $h:M\rightarrow [0,+ \infty )$ such that for any $s\geq 0$,
$$M=M(s)\cup \amalg_{i=1}^k E_i(s),$$
where the sublevel set $M(s)=\{x\in M\mid h(x)\leq s \}$ is a compact submanifold and $E_1(s),...,E_k(s)$ are the disjoint cusp ends of $M\setminus M(s)$. Note that furthermore, each cusp end $E_i(s)$ is geodesically convex for any $s\geq 0$, i.e.  for any two points $x,y\in E_i(s)$, the unique geodesic between $x$ and $y$ is contained in $E_i(s)$. Also, $\pi^{-1}(E_i(s))$ is a disjoint union of horoballs.

Choose $b_0\in M(0)$ and, for every $1\leq i\leq k$ and $j \in \mathbb{N} $, points $b_{ij}\in E_i(j-1)\setminus E_i(j)$. Define a measurable map $\overline{p}:M\rightarrow
\{b_0\}\cup \{b_{ij}\}_{\{1\leq i \leq k,  j \in \mathbb{N} \}}$ by  $\overline{p}(M(0))=b_0$ and $\overline p(E_i(j-1)\setminus E_i(j))={b_{ij}}$
for every $i$ and $j\in \mathbb{N}$. Lift $\overline p$ to a $\Gamma$-equivariant measurable map
$$p:\widetilde{M}\longrightarrow \pi^{-1}(\{b_0\}\cup \{b_{ij}\}_{\{1\leq i \leq k, j\in \mathbb{N} \}} )\subset \widetilde{M}$$
with the property that the image by $p$ of a horoball of $\pi^{-1}(E_i(j))$ remains in the given horoball,
where $\pi$ is the  projection $\pi:\widetilde{M}\rightarrow M$.

The pullback map $p^*: C^q_{cpct}(M,\mathbb{R})\rightarrow C^q_{cpct}(M,\mathbb{R})$ is defined by sending a cochain $c\in  C^q_{cpct}(M,\mathbb{R})$ to the cochain
$$p^*(c)(\sigma)=c(\pi_* str(p(\sigma_0),...,p(\sigma_q))),$$
where $\sigma_0,...,\sigma_q\in \widetilde{M}$ are the vertices of a lift of $\sigma$ to $\widetilde{M}$. Note that $p^*(c)$ indeed has compact support. To see that, let $C$ be the compact support of $c$. There exists $s$ such that $C\subset M(s)$. Let $\sigma:\Delta^q\rightarrow M$ be a singular simplex whose image does not intersect $M(s)$, say $\mathrm{Im}(\sigma)\subset E_i(s)$. Any lift of $\sigma$ will be contained in a single horoball in the preimage $\pi^{-1}(E_i(s))$. The same is true for the vertices of this lift, and hence for the straightened simplex $str(p(\sigma_0),...,p(\sigma_q))$. It follows that the image of $\pi_* str(p(\sigma_0),...,p(\sigma_q))$ is contained in $E_i(s)$ so that the evaluation of $c$ on it indeed vanishes. It is obvious that $p^*$ commutes with the coboundary, and it is easy to check that it is homotopic to the identity. Thus it induces the identity on cohomology. Clearly, the cochain map restricts to bounded cochains and also induces the identity on the bounded cohomology group. Note that it is easy to show that these maps, at cohomology level, are independent of the chosen points $b_0$ and $b_{ij}$, though we are not going to need this fact.

%Let $C$ be the support of $c$. Since $C$ is compact, there exists
%$s\geq 0$ such that $C\subset M(s)$. Choose $b_0\in M(s)$ and $b_i\in E_i(s)$ for
 %$1\leq i\leq k$, and a measurable map $\overline{p}:M\rightarrow
%\{b_1,...,b_k\}$ defined as $\overline{p}(M(s))=b_0$ and $\overline p(E_i(s))={b_i}$
%for every $i$. Lift $\overline p$ to a $\Gamma$-equivariant measurable map
%$$p:\widetilde{M}\longrightarrow \pi^{-1}(\{ b_0,b_1,...,b_k \} )\subset \widetilde{M}$$
%with the property that the image by $p$ of a horoball of $\pi^{-1}(E_i(s))$ is contained in the horoball
%where $\pi$ is a  projection $\pi:\widetilde{M}\rightarrow M$. We start by defining a function
%$$p^*(c):(\widetilde{M})^{q+1}\longrightarrow \mathbb{R}$$
%by
%$$p^*(c)(x_0,...,x_q)=c(\pi (str(p(x_0),...,p(x_q)))).$$
%Remark that $p^*(c)$ is $\Gamma$-invariant and it vanishes on tuples of points which lie in the same horoball of the disjoint union
%of horoballs $\pi^{-1}(E_i(s))$. To prove the last assertion, note that the straightening of a tuple of points in a horoball
%of $\pi^{-1}(E_i(s))$ will stay in the horoball, so that the image of $\pi (str(p(x_0),...,p(x_q)))$ is contained in
 %$E_i(s)\subset M\setminus C$ and $c$ vanishes on it. The cochain $p^*(c)$ is independent of the chosen lift of $\overline{p}$, but certainly not of the points $b_0,...,b_k$.

The image of $p^*$ is contained in a subcocomplex which we denote by $C^q_{cpct,meas}(M,\mathbb{R})\subset C^q_{cpct}(M,\mathbb{R})$ and $C^q_{cpct,meas,b}(M,\mathbb{R})\subset $ $C^q_{cpct,b}(M,\mathbb{R})$ in the bounded case, of cochains whose evaluation on singular simplices $\sigma$ is given by evaluation on  the vertices of a lift on a $\Gamma$-equivariant measurable function $f:\widetilde{M}^{q+1}\rightarrow \mathbb{R}$.

A transfer map
$$trans:C^q_{cpct,meas}(M,\mathbb{R})\longrightarrow C(\widetilde{M}^{q+1},\mathbb{R})^G$$
is defined by sending a cochain $c$ to
$$trans(c)(x_0,...,x_q)=\int_{g\in D} p^*(c)(gx_0,...,gx_q)d\mu(g),$$
where $D$ is a fundamental domain for  $\Gamma\setminus G$ normalized to have measure $1$ and $x_0,...,x_q$ are points in $\widetilde{M}$.
To see that the integral is finite, let $d=\mathrm{max}_i d(x_0,x_i)$ and denote by $M(s)_d$ the closure of the $d$-neighborhood of $M(s)$.
Here, we choose a positive integer $s$ with $C\subset M(s)$. Since $M(s)_d$ is compact, so is
$$D_0=\{ g\in D \subset G \mid \pi(gx_0)\in M(s)_d\}.$$
We claim that $p^*(c)(gx_0,...,gx_q)=0$ for $g\in D\setminus D_0$. Indeed, since $gx_0\in \pi^{-1}(M\setminus M(s)_d)$, it belongs to a horoball of $\pi^{-1}(E_i(s))$ for some cusp $E_i$ and furthermore a ball of radius $d$ centered at $gx_0$ is also contained in the same horoball. In particular, $gx_1,...,gx_q$ and the straightened simplex $str(p(gx_0),...,p(gx_q))$ all belong to the same horoball and hence project to $E_i(s)$, on which $c$ vanishes. Note that for  $g\in D_0$, the evaluation $p^*(c)(gx_0,...,gx_q)$ takes only finitely many values (depending on $x_0,...,x_q$). It is further easy to see that the integral is independent of the choice of fundamental domain $D$, that $trans(c)$ is $G$-invariant and that the transfer map commutes with coboundaries. It thus induces a cohomology map and we will denote by
$$trans:H^q_{cpct}(M)\longrightarrow H^q_c(G).$$
 We denote by $trans_b$ the map on
bounded cohomology. Since $trans_b$ is, at cochain level, the
restriction of $trans$, the commutativity of the diagram
\[
\begin{CD}
  H^q_{cpct, b}(M)    @>trans_b>>          H^n_{c,b}(G,\br) \\
                      @VV c V                             @VV c V \\
  H^q_{cpct}(M) @>trans>>  H^n_{c}(G,\br)    \end{CD} \]
is obvious. Let us finally show that the full diagram (\ref{equ: CD with trans}) also commutes.

\begin{Prop} \label{prop: trans(omega)=omega}The diagram (\ref{equ: CD with trans}) commutes.
\end{Prop}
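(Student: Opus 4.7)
The plan is to show the bottom square commutes; the top square is tautological since $trans_b$ is by definition the restriction of $trans$ to bounded cochains and the vertical arrows are the inclusion-induced comparison maps. For the bottom square we use the de Rham representative $I_{str}$ from Section \ref{derham}, which, as noted there, induces the same isomorphism as $I$ in cohomology. Thus it suffices to show, for an arbitrary $\alpha\in\Omega^q_{cpct}(M)$ (viewed as a $\Gamma$-invariant compactly supported form $\widetilde\alpha$ on $\widetilde M=G/K$), that the continuous $G$-invariant cocycle $trans(I_{str}(\alpha))$ on $\widetilde M^{q+1}$ is cohomologous to the Van Est representative of $trans_{dR}(\alpha)$.

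First I would unravel both compositions at the cochain level. Using that straightening is idempotent, that $\widetilde\alpha$ is $\Gamma$-invariant, and Fubini (valid because, for fixed $x_0,\dots,x_q\in\widetilde M$, the integrand vanishes outside a compact subset $D_0\subset D$, exactly as in the well-definedness argument for $trans$), one computes
\begin{align*}
trans(I_{str}(\alpha))(x_0,\dots,x_q)&=\int_D\int_{str(p(gx_0),\dots,p(gx_q))}\widetilde\alpha\,d\mu(g),\\
c_{VE}(x_0,\dots,x_q)&:=\int_{str(x_0,\dots,x_q)}trans_{dR}(\alpha)=\int_D\int_{str(gx_0,\dots,gx_q)}\widetilde\alpha\,d\mu(g),
\end{align*}
where $c_{VE}$ is the standard Van Est representative of $[trans_{dR}(\alpha)]\in H^q_c(G,\br)$ obtained by integrating a $G$-invariant closed form over a straight simplex. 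So the two cocycles differ precisely by the insertion of $p$ at each vertex.

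Second, I would show these two cocycles are cohomologous via a standard prism/cone construction. Define the $(q-1)$-cochain
$$h(x_0,\dots,x_{q-1})=\sum_{i=0}^{q-1}(-1)^i\int_D\int_{str(p(gx_0),\dots,p(gx_i),gx_i,\dots,gx_{q-1})}\widetilde\alpha\,d\mu(g).$$
The same cutoff argument as for $trans$ (plus $\Gamma$-equivariance of $p$ and $G$-invariance of $\mu$) shows $h$ is a well-defined $G$-invariant measurable cochain, and the simplicial prism identity
$$\delta h = c_{VE}-trans(I_{str}(\alpha))$$
is a direct consequence of Stokes' theorem $\int_{\partial str(\cdot)}\widetilde\alpha=0$ (as $\widetilde\alpha$ is closed) together with the combinatorial cancellation of the mixed $p$/identity vertices, which is the usual simplicial-homotopy computation. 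This yields the desired equality in $H^q_c(G,\br)$.

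The only substantive obstacle is ensuring that the measurable $G$-invariant cocycles arising here legitimately compute $H^q_c(G,\br)$ with the stated norm; for this I invoke the standard fact (cf.\ \cite[Cor.\ 7.4.10]{Monod}) that the measurable and continuous $G$-invariant cocomplexes on $(G/K)^{q+1}$ both compute continuous (bounded) cohomology isomorphically and isometrically, so the homotopy argument above, though written in the measurable cocomplex, gives the required equality in $H^q_c(G,\br)$.
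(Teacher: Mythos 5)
Your proposal is correct and follows essentially the same route as the paper: identify the vertical isomorphisms at cochain level with integration of the form over straightened simplices, unravel both compositions to see they differ only by insertion of $p$ at the vertices, and exhibit an explicit prism-type $(q-1)$-cochain whose coboundary is the difference using closedness of the form. Your version is slightly more careful on two points the paper glosses over (the Fubini/cutoff justification and the status of measurable $G$-invariant cochains), but the argument is the same.
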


Note that in particular it follows that $trans(\omega_M)=\omega_{\widetilde{M}}$. Indeed, $\omega_M$ is represented
by $\rho \cdot (\Gamma\backslash \omega_{\widetilde M})$ where $\rho$ is a
compactly supported smooth function on $M$ so that $\int_M \rho \cdot
(\Gamma\backslash \omega_{\widetilde M})=vol(M)$. From section
\ref{deRham}, the image $trans(\omega_M)$ is independent of the
choice of $\rho$. Choose an exhausting sequence $(K_n)_{n\in \mathbb{N}}$ of compact connected subsets of $M$ with nonempty interior satisfying $\lim_{n\ra \infty} \mathrm{Vol}(K_n)=\mathrm{Vol}(M)$. Then, there exist $\epsilon_n>0$ and a smooth function $\rho_n : M \ra [0,1+\epsilon_n]$ supported on the $\epsilon_n$-neighborhood of $K_n$ such that $\rho_n |_{K_n} = 1+\epsilon_n$ and $\int_M \rho_n \cdot (\Gamma\backslash \omega_{\widetilde M})=vol(M)$. It is easy to see that the maps $\rho_n$ converges uniformly on any compact subset of $M$ to the constant function $c=1$ on $M$. Considering this sequence $(\rho_n)_{n\in \mathbb{N}}$, one can conclude that for any $\rho$, the average of a compactly supported differential form $\rho \cdot (\Gamma\backslash \omega_{\widetilde M})$ over the fundamental domain is $\omega_{\widetilde M}$.

\begin{proof} It remains to show that the lower diagram commutes. Note that the vertical isomorphisms are induced, at cochain level, by the map
$$\Phi: \Omega^q(G/K)^G \longrightarrow C((G/K)^{q+1},\br)^G$$
sending the differential form $\alpha$ to the cochain $\Phi(\alpha)$ mapping a $(q+1)$-tuple of points $(x_0,...,x_q)\in (G/K)^{q+1}$ to
$$\int_{ str(x_0,...,x_q)}\alpha.$$
This map is clearly $G$ (and hence $\Gamma$) - equivariant and
restricts to the subcomplexes of compact support. Furthermore, for
the left vertical arrow, $\Phi$ should further be precomposed with
the map sending a singular simplex in $\widetilde{M}$ to its
vertices. Here we use again the $\bq$-rank oneness to restrict to the subcomplexes of compact support, see section \ref{derham}. To prove the proposition, we need to show that $trans\circ
\Phi$ and $\Phi\circ trans_{dR}$ differ by a coboundary. One checks
easily that
\begin{eqnarray*}
 trans\circ \Phi(\alpha)(x_0,...,x_q)&=& \int_{g\in \Gamma\setminus G}\int_{str(p(gx_0),...,p(gx_q))} \alpha d\mu(g)
\end{eqnarray*}
while
\begin{eqnarray*}
 \Phi\circ trans_{dR}(\alpha)(x_0,...,x_q)&=& \int_{g\in \Gamma\setminus G}\int_{str(gx_0,...,gx_q))} \alpha d\mu(g) .
\end{eqnarray*}
It is immediate that, since $d\alpha=0$, the coboundary of the $G$-invariant cochain
$$(x_0,...,x_{q-1})\longmapsto \sum_{i=0}^{q-1} (-1)^i \int_{g\in \Gamma\setminus G}\int_{str(gx_0,...,gx_i,p(gx_i),...,p(gx_q))} \alpha d\mu(g)$$
is equal to the difference of the two given cocycles, which finishes
the proof of the proposition.
\end{proof}

\subsection{Norms and first inequality of Proposition \ref{mainProp}}\label{sec:5.3}

The inequality $ \| \omega_M \|^\infty \geq \| \omega_{\widetilde{M}} \|_\infty$ of Proposition \ref{mainProp} is an immediate consequence of the following lemma:

\begin{lemma} For every $\beta\in H^q_{cpct}(M)$, one has
$$ \| trans(\beta) \|_\infty \leq \| \beta \|^\infty.$$
\end{lemma}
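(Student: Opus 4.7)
The plan is to produce, for each $\varepsilon > 0$, a bounded $G$-invariant cocycle $f \in C_b((G/K)^{q+1})^G$ representing $trans(\beta)$ with $\|f\|_\infty \leq \|\beta\|^\infty + \varepsilon$; my candidate will be $f = trans(p^*(b))$ for a carefully chosen representative $b$ of $\beta$.

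I would start from a pointwise estimate. The cochain-level formula
\[ trans(p^*(b))(x_0, \ldots, x_q) = \int_D b(\sigma_g) \, d\mu(g), \qquad \sigma_g = \pi_* \, str(p(g x_0), \ldots, p(g x_q)), \]
with $\mu(D) = 1$, together with the finiteness observation of Section \ref{Section: Transfer}---the integrand vanishes outside a compact $D_0 \subset D$ and takes only finitely many values on $D_0$, corresponding to a finite collection $\Phi(x_0, \ldots, x_q) \subset \mathcal{S}_q$ of straightened simplices---yields
\[ |trans(p^*(b))(x_0, \ldots, x_q)| \leq \max_{\sigma \in \Phi(x_0, \ldots, x_q)} |b(\sigma)|. \]
Since each finite family $\Phi(x_0, \ldots, x_q)$ is a fortiori locally finite, the definition $\|\beta\|^\infty = \sup_{\Phi \in \mathcal{S}^\mathrm{lf}_q} \inf_{[b]=\beta} \sup_{\sigma \in \Phi} |b(\sigma)|$ furnishes, for each individual $(x_0, \ldots, x_q)$, a representative $b$ of $\beta$ satisfying $\max_{\sigma \in \Phi(x_0, \ldots, x_q)} |b(\sigma)| \leq \|\beta\|^\infty + \varepsilon$.

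The main obstacle will be to upgrade this pointwise bound, where the representative depends on the point, to a uniform bound with a single representative. Naively applying the definition of $\|\beta\|^\infty$ to the union $\bigcup_{(x_0, \ldots, x_q)} \Phi(x_0, \ldots, x_q)$, essentially the countable set $\mathcal{T}$ of all straightened simplices of the form $\pi_* str(p(y_0), \ldots, p(y_q))$ with $y_i \in \widetilde{M}$, fails because $\mathcal{T}$ is in general not locally finite: in a single cusp, the infinite family of straightened $1$-simplices from $b_0$ to the successively deeper vertices $b_{ij}$ all passes through any neighborhood of $b_0$. To circumvent this I plan a diagonal compactness argument: exhaust $\mathcal{T}$ by finite subfamilies $\mathcal{T}_1 \subset \mathcal{T}_2 \subset \cdots$, pick representatives $b_N$ with $\sup_{\sigma \in \mathcal{T}_N}|b_N(\sigma)| \leq \|\beta\|^\infty + \varepsilon/2$, and apply Cantor's diagonal procedure to extract a subsequence $b_{N_k}$ whose restrictions converge pointwise on $\mathcal{T}$ to a limit $b^*$ satisfying $|b^*(\sigma)| \leq \|\beta\|^\infty + \varepsilon/2$ throughout $\mathcal{T}$.

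The cocycle property of the $b_{N_k}$ passes to $b^*$ on the $(q+1)$-dimensional analogue of $\mathcal{T}$, so $p^*(b^*)$ is a well-defined cocycle on $\widetilde{M}^{q+1}$, and by the pointwise estimate $trans(p^*(b^*))$ is a bounded $G$-invariant cocycle with sup-norm at most $\|\beta\|^\infty + \varepsilon/2$. The delicate remaining step, which I expect to be the principal technical obstacle, is to verify that $[trans(p^*(b^*))] = trans(\beta)$ in $H^q_c(G, \br)$: the cocycles $trans(p^*(b_{N_k}))$ all represent $trans(\beta)$ and converge pointwise to $trans(p^*(b^*))$ under a uniform bound, so I plan to close the argument by invoking a suitable duality or dominated-convergence statement---for instance, pairing with bounded homology classes of $G$---to transfer this convergence into the identity of cohomology classes. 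Once this is done, $f = trans(p^*(b^*))$ provides the desired bounded representative, and letting $\varepsilon \to 0$ yields the lemma.
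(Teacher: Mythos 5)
Your pointwise estimate, and your diagnosis that the union $\mathcal{T}$ of all straightened simplices with $p$-vertices fails to be locally finite, are both correct; but the diagonal-extraction step leaves a genuine gap exactly at the point you flag, and that gap is not closable by the tools you propose. The cocycles $trans(p^*(b_{N_k}))$ all represent $trans(\beta)$, so any two differ by a coboundary $\delta c_k$; nothing in your construction makes the primitives $c_k$ bounded, let alone convergent, and pointwise convergence of cocycles under a uniform sup bound does not preserve the cohomology class. Moreover, the suggested rescue is not available: $trans(\beta)$ lives in the \emph{unbounded} continuous cohomology $H^q_c(G,\br)$, where there is no pairing with bounded or $\ell^1$-type homology of $G$ that detects classes, and no dominated-convergence statement converting pointwise convergence of representatives into equality of classes. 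There is also a secondary defect: $b^*$ is only a limit function on $\mathcal{T}$, not a compactly supported cocycle on $M$, so $p^*(b^*)$ is not of the form $p^*(b)$ for any representative $b$ of $\beta$, and the continuity and class-identification properties needed to regard $trans(p^*(b^*))$ as a representative of $trans(\beta)$ in $C((G/K)^{q+1},\br)^G$ are precisely what is missing.

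The paper's proof never needs a single representative good for all tuples, and this is the idea you are missing: the order of quantifiers. Choosing a cocompact torsion-free lattice $\Lambda<G$, the isometry of the restriction $H^*_{c,b}(G)\to H^*_b(\Lambda\backslash\widetilde M)$ and the compactness of $\Lambda\backslash\widetilde M$ (on a compact manifold locally finite families of simplices are finite and the two norms coincide) yield
$$\| trans(\beta)\|_\infty=\sup_{\Phi\ \mathrm{finite}}\ \inf_{[a]=trans(\beta)}\ \sup_{\sigma\in\Phi}|a(\sigma_0,\dots,\sigma_q)|.$$
Because the supremum over families stands \emph{outside} the infimum over representatives, it suffices, for each fixed finite $\Phi$, to choose a representative $b$ of $\beta$ adapted to the locally finite family $\Phi_p=\{\pi_* str(p(g\sigma_0),\dots,p(g\sigma_q))\mid \sigma\in\Phi,\ g\in D\}$, exactly as your pointwise estimate suggests; one then bounds $\sup_{\sigma\in\Phi}|trans(b)(\sigma)|$ by $\sup_{\sigma\in\Phi_p}|b(\sigma)|$ and takes the supremum over $\Phi$ only at the end, as in Section \ref{sec:5.3}. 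No limit of representatives is ever formed, which is precisely what removes the obstruction your argument cannot overcome; to repair your proof, restructure it along these lines rather than attempt the unproved class-identification statement.
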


\begin{proof}
Let $\Lambda<G$ be a cocompact torsion free lattice. Because of the existence of a transfer map, the restriction map $i^*:H^*_{c,b}(G)\rightarrow H^*_b(\Lambda)\cong H^*_b(\Lambda \setminus \widetilde{M})$ is isometric. Thus, for every $\alpha\in H^*_{c,b}(G)$, we have the equality $\|\alpha \|_\infty=\|i^*(\alpha)\|_\infty $. Furthermore since $\Lambda \setminus \widetilde{M}$ is compact, the upper and lower infinity norms agree $\|i^*(\alpha)\|_\infty=\|i^*(\alpha)\|^\infty$. As on a compact manifold a locally finite set of singular simplices is actually finite, it immediately follows that the norm of $\alpha \in H^*_{c,b}(G)$ can be computed as
$$\| \alpha \|_\infty = \sup_{\Phi \mathrm{\ finite}} \inf_{[a]=\alpha} \sup_{\sigma\in \Phi}|a(\sigma_0,...,\sigma_q)|,$$
where here and in the sequel, $\sigma_0,...,\sigma_q$ are the vertices of the singular simplex $\sigma$. Applying this equality to $\alpha=trans(\beta)$ and restricting the infimum to the cocycles of the form $a=trans(b)$, where $b\in C^n_{cpct}(M)$ is a compact support cocycle representing $\beta$, we obtain the inequality
$$\| trans(\beta)\|_\infty \leq  \sup_{\Phi \mathrm{\ finite}} \inf_{[b]=\beta} \sup_{\sigma\in \Phi}|trans(b)(\sigma)|.$$
Let $\Phi_p$ denote the following set of singular simplices,
$$\Phi_p=\{ \pi_* str(p(g\sigma_0),...,p(g\sigma_q)) \mid \sigma\in \Phi, \ g\in D\},$$
where $D$ is a fundamental domain for $\Gamma\setminus G$. Note that as $\Phi$ is finite $\Phi_p$ is locally finite. Since by definition, $trans(b)$ is an average of evaluations of $b$ on singular simplices in $\Phi_p$, it is immediate that
$$\sup_{\sigma\in \Phi} | trans(b)(\sigma_0,...,\sigma_q)|\leq \sup_{\sigma\in \Phi_p} |b(\sigma)|.$$
It follows that
$$\| trans(\beta)\|_\infty \leq \sup_{\Phi \mathrm{\ finite}} \inf_{[b]=\beta} \sup_{\sigma\in \Phi_p} |b(\sigma)| = \sup_{\Phi_p } \inf_{[b]=\beta} \sup_{\sigma\in \Phi_p} |b(\sigma)|,$$
where the first sup of the latter expression is taken over all families of simplices of the form $\Phi_p$, for a finite family $\Phi$. Finally, as $\Phi_p$ is locally finite we obtain
$$\| trans(\beta)\|_\infty\leq \sup_{\Psi \in \mathcal{S}^\mathrm{lf}_q} \inf_{[b]=\beta} \sup_{\sigma\in \Psi} |b(\sigma)|=\|\beta\|^\infty,$$
which finishes the proof of the lemma.
\end{proof}
%It is shown by L\"oh-Sauer that that
%$$ \| M \|_{\text{Lip}}= \frac{\text{Vol}(M)}{\| \omega_{\widetilde{M}} \|_\infty}.$$
%From Theorem \ref{thm:dualityFORopen} and $\| M \|_\mathrm{lf} \leq \| M \|_{\text{Lip}}$,

\section{Cusp map and relation to bounded cohomology}\label{Section: coning}

\subsection{Cusp map}
To prove the theorem, we will need the existence of a
$\Gamma$-equivariant cusp map, which we establish in this section. We first review the reduction theory for arithmetic lattices. The main reference is \cite{Borel-Ji}.

Let $\mathbf{G}$ be a connected, semisimple algebraic group
defined over $\bq$ with trivial center. Let $X=G/K$ be the
associated symmetric space of noncompact type with
$G=\mathbf{G}(\br)^0$ and a maximal compact subgroup $K$ of $G$. A
closed subgroup $\mathbf{P}$ of $\mathbf{G}$ defined over $\bq$ is
called \emph{rational parabolic subgroup} if $\mathbf{P}$ contains
a maximal, connected, solvable subgroup of $\mathbf{G}$. For any
rational parabolic subgroup $\mathbf{P}$ of $\mathbf{G}$, we
obtain the \emph{rational Langlands decomposition} of
$P=\mathbf{P}(\br)$:
$$P = N_{\mathbf{P}} \times A_\mathbf{P} \times M_\mathbf{P},$$
where $N_\mathbf{P}$ is the real locus of the unipotent radical $\mathbf{N}_\mathbf{P}$ of $\mathbf{P}$,
$A_\mathbf{P}$ is a stable lift of the identity component of the real locus of the the maximal $\bq$-torus in the Levi quotient $\mathbf{P} / \mathbf{N}_\mathbf{P}$ and $M_\mathbf{P}$ is a stable lift of the real locus of the complement of the maximal $\bq$-torus in $\mathbf{P} / \mathbf{N}_\mathbf{P}$.

Write $X_\mathbf{P}=M_\mathbf{P}/K \cap M_\mathbf{P}$. Let us denote by $\tau :  M_\mathbf{P} \ra
X_\mathbf{P}$ the canonical projection. After fixing a basepoint $x_o \in X$, we have an analytic diffeomorphism
$$\mu : N_{\mathbf{P}} \times A_\mathbf{P} \times X_\mathbf{P} \ra X, \ (n, a, \tau(m)) \ra nam \cdot x_0,$$
This is called the \emph{rational horocyclic decomposition} of $X$.

Let $\Gamma$ be an arithmetic lattice in $G=\mathbf{G}(\br)$ with $\bq$-$\mathrm{rank}(\mathbf{G})=1$. It is a well known fact due to A. Borel and Harish-Chandra that there are only finitely many $\Gamma$-conjugacy classes of proper rational parabolic subgroups. Furthermore, all rational proper parabolic subgroups are minimal and these subgroups are conjugate under $\mathbf{G}(\bq)$ (see \cite[Theorem 11.4]{Borel}). Note that the set of simple positive $\bq$-roots contains only a single element and $\mathrm{dim} A_\mathbf{P}=1$ for any proper rational parabolic subgroup $\mathbf{P}$ of $\mathbf{G}$.

The locally symmetric space $M=\Gamma \backslash X$ has finitely many cusps and each cusp corresponds to a $\Gamma$-conjugacy class of a minimal rational parabolic subgroup. Let $\mathbf{P}_1,\ldots,\mathbf{P}_k$ denote representatives of the $\Gamma$-conjugacy classes. For each minimal rational parabolic subgroup $\mathbf{P}_i$, set $$V_{\mathbf{P}_i}(t) = \mu (N_{\mathbf{P}_i} \times A_{\mathbf{P}_i,t} \times X_{\mathbf{P}_i})$$ where $A_{\mathbf{P}_i,t}=\{ a\in A_{\mathbf{P}_i} : \alpha_i(\log a) > t\}$. Here $\alpha_i$ is the unique positive simple $\bq$-root corresponding to $\mathbf{P}_i$. Write $\Gamma_\mathbf{P_i}=\Gamma \cap \mathbf{P_i}$ for $i=1,\ldots,k$. There exists $t_0>0$ such that for all $t>t_0$, $M$ admits the following disjoint decomposition:
$$M=\Omega_t \cup  \amalg_{i=1}^k \Gamma_{\mathbf{P}_i} \backslash V_{\mathbf{P}_i}(t),$$
where $\Omega_t$ is a compact submanifold with boundary.
Furthermore, for $t>t_0$ and each $i$, we have
$$\Gamma_{\mathbf{P}_i} \backslash V_{\mathbf{P}_i}(t) \cong \Gamma_{\mathbf{P}_i} \backslash N_{\mathbf{P}_i} \times A_{\mathbf{P}_i,t} \times X_{\mathbf{P}_i} = \Gamma_{\mathbf{P}_i} \backslash ( N_{\mathbf{P}_i} \times X_{\mathbf{P}_i}) \times A_{\mathbf{P}_i,t}.$$
Geometrically, each $V_{\mathbf{P}_i}(t)$ is a horoball in $X$. In fact, the disjoint decomposition $M=\Omega_t \cup  \amalg_{i=1}^k \Gamma_{\mathbf{P}_i} \backslash V_{\mathbf{P}_i}(t)$ gives the same disjoint decomposition $M=M(s) \cup  \amalg_{i=1}^k E_i(s)$ for some $s>0$  in Section \ref{subsec: def of p}.

\begin{Prop}  \label{prop: coning map} Let $M$ be a $\bq$-rank $1$ locally
symmetric space and let $\Gamma=\pi_1(M)$ be its fundamental group.
Let $G$ denote the identity component of the isometry group of the universal cover
$\widetilde{M}$ of $M$ and let $P$ be a minimal rational parabolic subgroup of $G$. Then
there exists a $\Gamma$-equivariant measurable cusp map
$$ f:\widetilde{M} \longrightarrow G/P.$$
such that $f$ is constant on each horoball in $\widetilde{M}$ corresponding to a cusp of $M$.
\end{Prop}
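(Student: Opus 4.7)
My plan is to define $f$ first on the union of the horoballs covering the cusps of $M$, where the assignment is essentially forced by the constancy and $\Gamma$-equivariance requirements, and then to extend arbitrarily to the thick part via a measurable fundamental domain.

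I begin by using the reduction theory just recalled: for $t > t_0$, the preimage in $\widetilde{M}$ of the $i$-th cusp neighborhood $\Gamma_{\mathbf{P}_i} \backslash V_{\mathbf{P}_i}(t)$ is the disjoint union $\bigsqcup_{\gamma \in \Gamma/\Gamma_{\mathbf{P}_i}} \gamma V_{\mathbf{P}_i}(t)$ of horoballs, and horoballs arising from different cusps or from different cosets are mutually disjoint. Since all proper rational parabolic subgroups of $\mathbf{G}$ are $\mathbf{G}(\bq)$-conjugate, I choose, for each $i = 1, \ldots, k$, an element $g_i \in G$ satisfying $g_i P g_i^{-1} = P_i$. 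In particular $g_i^{-1} \Gamma_{\mathbf{P}_i} g_i \subset g_i^{-1} P_i g_i = P$.

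Next, on the union of horoballs I declare $f(x) = \gamma g_i P \in G/P$ whenever $x \in \gamma V_{\mathbf{P}_i}(t)$. Well-definedness holds because a coincidence $\gamma V_{\mathbf{P}_i}(t) = \gamma' V_{\mathbf{P}_j}(t)$ forces $i = j$ (the underlying parabolic, being the stabilizer of the horoball, is determined by the horoball) and $\gamma^{-1}\gamma' \in \Gamma_{\mathbf{P}_i} \subset g_i P g_i^{-1}$, whence $\gamma g_i P = \gamma' g_i P$. Constancy on each horoball is immediate from the formula, and the identity $\gamma'(\gamma V_{\mathbf{P}_i}(t)) = (\gamma'\gamma) V_{\mathbf{P}_i}(t)$ gives $f(\gamma' x) = \gamma'\gamma g_i P = \gamma' f(x)$, i.e.\ $\Gamma$-equivariance on the union of horoballs. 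To extend $f$ to the thick part $T = \widetilde{M} \setminus \bigcup_{i,\gamma} \gamma V_{\mathbf{P}_i}(t)$, I pick any measurable fundamental domain $D \subset \widetilde{M}$ for the free properly discontinuous $\Gamma$-action, fix an arbitrary $y_0 \in G/P$, set $f(d) := y_0$ for $d \in D \cap T$, and propagate by equivariance $f(\gamma d) := \gamma y_0$ for $\gamma \in \Gamma$. The resulting map $f : \widetilde{M} \to G/P$ is measurable (since $D$ and each horoball are Borel), $\Gamma$-equivariant, and constant on each horoball, as required.

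The main obstacle is exactly the algebraic well-definedness on horoballs, which rests on two inputs from Borel--Harish-Chandra reduction theory: the transitivity of $\mathbf{G}(\bq)$ on rational proper parabolic subgroups (so that the conjugators $g_i \in G$ exist) and the containment $\Gamma_{\mathbf{P}_i} \subset P_i$ of each cusp stabilizer inside its associated minimal parabolic. Both are already in hand from the discussion preceding the proposition, and no further geometric information about the thick part is required beyond the existence of a measurable fundamental domain.
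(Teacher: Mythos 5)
Your construction is correct and is essentially the paper's own argument: both use the $\mathbf{G}(\bq)$-conjugacy of minimal rational parabolics to pick $g_i$ with $\mathbf{P}_i=g_i\mathbf{P}g_i^{-1}$, send the horoball $\gamma V_{\mathbf{P}_i}(t)$ to $\gamma g_iP\in G/P$, and extend over the thick part by an arbitrary equivariant assignment on a measurable fundamental domain. The only difference is presentational: the paper defines $f$ on fundamental domains and extends equivariantly, whereas you define it directly on the horoballs and verify well-definedness via $\Gamma_{\mathbf{P}_i}\subset g_iPg_i^{-1}$ --- the same fact the paper uses implicitly when asserting $f(x)=g_iP$ on all of $V_{\mathbf{P}_i}(t)$.
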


\begin{proof}
Fix a $t>t_0$. Decompose $M=\Omega_t \cup \amalg_{i=1}^k \Gamma_{\mathbf{P}_i} \backslash V_{\mathbf{P}_i}(t)$ as before. We set $\mathbf{P}=\mathbf{P}_1$ and $P=\mathbf{P}(\br)^0$.
Since all minimal rational parabolic subgroups are conjugate under $\mathbf{G}(\bq)$, there exists $g_i \in G$ with $\mathbf{P}_i=g_i \mathbf{P} g_i^{-1}$ for $i=1,\ldots,k$. Let $F_{\mathbf{P}_i}(t)$ be a fundmental domain for the action of $\Gamma_{\mathbf{P}_i}$ on $V_{\mathbf{P}_i}(t)$ and $F_t$ be a fundamental domain for the action of $\Gamma$ on $\pi^{-1}(\Omega_t)$ where $\pi : \widetilde{M} \ra M$ is the universal covering map of $M$.
Map $F_t$ to $P$ and $F_{\mathbf{P}_i}(t)$ to $g_iP$. Extending this map $\Gamma$-equivariantly, we get a $\Gamma$-equivariant measurable map $f : \widetilde{M} \ra G/P$. Then it is easy to see that $f(x)=g_iP$ for all $x\in V_{\mathbf{P}_i}(t)$. This completes the proof.
\end{proof}

\subsection{The induced bounded cohomology map}

\begin{thm} \label{thm: induced from coning} Let $M$ be a $\bq$-rank $1$ locally symmetric space and let $\Gamma=\pi_1(M)$ be its fundamental group. Suppose that the fundamental group of any cusp of $M$ is amenable. Then the $\Gamma$-equivariant cusp map from Proposition \ref{prop: coning map}
$$  f:\widetilde{M} \longrightarrow G/P$$
induces for $q\geq 1$ a cohomology map
$$ f^*: H^q_{b}(\Gamma)\longrightarrow H^q_{cpct, b}(M)$$
such that
$$ trans_b \circ f^*=trans_\Gamma$$
and
$$ \| f^*(\alpha)\|_\infty \leq \| \alpha \|_\infty$$
for any $\alpha \in H^q_{b}(\Gamma)$. %Indeed, from
%$$\|trans_b\circ f^*(\alpha)\|_\infty=\|trans_\Gamma(\alpha)\|_\infty=\|\alpha\|_\infty,$$ we have
%$$\|f^*(\alpha)\|_\infty=\|\alpha\|_\infty,\ \|trans_b(\alpha)\|_\infty=\|\alpha\|_\infty.$$
\end{thm}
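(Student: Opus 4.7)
The plan is to construct $f^*$ at the cochain level by pulling back an appropriate representative of $\alpha$ along the cusp map $f$, using the constancy of $f$ on horoballs together with amenability of cusp groups to guarantee compact support.

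First, since the cusp subgroups $\Gamma_{\mathbf{P}_i}$ are amenable, the $\Gamma$-action on $G/P$ is amenable, so by standard (Burger--Monod) resolution theory the complex $L^\infty_{\mathrm{alt}}((G/P)^{\bullet+1})^\Gamma$ of alternating bounded measurable $\Gamma$-invariant cochains isometrically computes $H^*_b(\Gamma)$. I would represent $\alpha\in H^q_b(\Gamma)$ by such an alternating cocycle $c$ with $\|c\|_\infty$ arbitrarily close to $\|\alpha\|_\infty$, and define
\[
f^*(c)(\sigma):=c(f(\sigma_0),\ldots,f(\sigma_q)),
\]
where $\sigma_0,\ldots,\sigma_q\in\widetilde{M}$ are the vertices of a lift of $\sigma$. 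This is well-defined by $\Gamma$-equivariance of $f$ and $\Gamma$-invariance of $c$, satisfies $\|f^*(c)\|_\infty\leq\|c\|_\infty$, and commutes with the coboundary. For compact support, I would fix $t>t_0$ large enough that outside the compact set $\Omega_t$ the manifold decomposes into disjoint cusps $\Gamma_{\mathbf{P}_i}\backslash V_{\mathbf{P}_i}(t)$; if $\operatorname{Im}(\sigma)\cap\Omega_t=\emptyset$, any lift of $\sigma$ lies in a single horoball on which $f$ is constant with some value $\gamma g_iP$, and alternation of $c$ forces $c(\gamma g_iP,\ldots,\gamma g_iP)=0$ (using $q\geq 1$), so $f^*(c)(\sigma)=0$. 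The same alternation argument, applied to an alternating primitive $d$ of $c=\delta d$ obtained by applying the alternation operator to any primitive (which preserves the relation $\delta d=c$ since alternation commutes with $\delta$), shows that $f^*(d)$ is compact-supported and $f^*(c)=\delta f^*(d)$ in $C^*_{cpct,b}(M)$, so $f^*$ descends to cohomology with $\|f^*(\alpha)\|_\infty\leq\|\alpha\|_\infty$.

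For the identity $trans_b\circ f^*=trans_\Gamma$, I would view $\hat c(x_0,\ldots,x_q):=c(f(x_0),\ldots,f(x_q))$ as a $\Gamma$-invariant bounded measurable function on $\widetilde{M}^{q+1}=(G/K)^{q+1}$. It represents $\alpha$ under the natural chain map from the $(G/P)$- to the $(G/K)$-resolution of $H^*_b(\Gamma)$, so $trans_\Gamma(\alpha)$ is represented by $(x_0,\ldots,x_q)\mapsto\int_D c(f(gx_0),\ldots,f(gx_q))\,d\mu(g)$. Unpacking the definitions of $trans_b$ and $p^*$ from Section~\ref{subsec: def of p}, the cocycle $trans_b(f^*(c))$ is represented by the same expression but with each $gx_i$ replaced by $p(gx_i)$ inside $f$. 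Since $p^*$ is chain-homotopic to the identity (Section~\ref{subsec: def of p}), the two cocycles on $(G/K)^{q+1}$ differ by a $\Gamma$-coboundary; applying $trans_\Gamma$ preserves this relation, yielding equality in $H^q_{c,b}(G)$.

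The main obstacle is the first step: rigorously establishing that $L^\infty((G/P)^{\bullet+1})^\Gamma$ isometrically computes $H^*_b(\Gamma)$ from the amenable-cusp hypothesis, equivalently that the $\Gamma$-action on $G/P$ is amenable. All remaining steps — construction of $f^*$, compact support via alternation, and the transfer identity via the $p^*$ chain homotopy — are essentially formal once this resolution is in place. The low-degree case $q=1$ (where a $0$-cochain primitive need not vanish on constant tuples of $G/P$) requires a small ad hoc argument but does not affect the main content.
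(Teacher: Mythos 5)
Your overall architecture is the same as the paper's: pull back an alternating invariant cocycle on $(G/P)^{q+1}$ along the cusp map at the vertices of lifted simplices, get compact support from constancy of $f$ on horoballs plus alternation (for $q\geq 1$), and compare $trans_b\circ f^*$ with $trans_\Gamma$ through the $(G/K)$-resolution. The genuine gap is in your first step, and it is not only the point you flag. You propose to work in the Burger--Monod complex $L^\infty_{\mathrm{alt}}((G/P)^{\bullet+1})^\Gamma$ of \emph{measurable} cochains, justified by ``amenable cusp subgroups, hence amenable $\Gamma$-action on $G/P$''. First, that implication is not valid as stated: in the measured category, amenability of point stabilizers does not give amenability of the action; for a lattice $\Gamma<G$ acting on $G/P$ with the Lebesgue measure class, amenability of the action is equivalent to amenability of the closed subgroup $P$, and a minimal \emph{rational} parabolic need not be amenable a priori (its Levi factor $M_{\mathbf{P}}$ can be a noncompact semisimple group); one can deduce amenability of $P$ from the amenable-cusp hypothesis via the structure of the cusp cross-sections, but that is a separate argument you would have to supply. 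Second, and more seriously, even granting amenability of the action, elements of $L^\infty((G/P)^{q+1})$ are equivalence classes modulo null sets, whereas the cusp map $f$ takes values in the countable set $\Gamma\cdot\{g_1P,\ldots,g_kP\}$, which is Lebesgue-null in $G/P$. Hence $f^*(c)(\sigma)=c(f(\sigma_0),\ldots,f(\sigma_q))$ is not well defined on such classes, and your compact-support argument needs the literal values $c(\xi,\ldots,\xi)=0$ at specific points of the diagonal, which is itself a null set, so ``alternating almost everywhere'' gives no information there. This is precisely why the paper does not use the measurable resolution: it invokes \cite[Section 6]{BucherMonod}, where for an action of $\Gamma$ on a \emph{set} with amenable point stabilizers the complex $\ell^\infty_{\mathrm{alt}}$ of everywhere-defined bounded alternating invariant functions computes $H^*_b(\Gamma)$ isometrically. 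An alternative repair of your route is to replace $G/P$ by the countable $\Gamma$-set of rational parabolic points (stabilizers are exactly the cusp groups), where $L^\infty=\ell^\infty$ and no null-set issue arises.

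Apart from this, your proof matches the paper's: the definition of $f^*$, the horoball/alternation argument, and the norm estimate (which, as you note, uses that the chosen resolution computes the seminorm isometrically) are identical. For the identity $trans_b\circ f^*=trans_\Gamma$, your appeal to ``$p^*$ is chain homotopic to the identity'' is slightly off target, since that homotopy lives on $C^*_{cpct}(M)$ and not in the equivariant complex $C_b((G/K)^{\bullet+1})^\Gamma$ where you need the coboundary relation; the clean fix is the standard interleaving cochain $(x_0,\ldots,x_{q-1})\mapsto\sum_{i}(-1)^i c\bigl(f(x_0),\ldots,f(x_i),f(p(x_i)),\ldots,f(p(x_{q-1}))\bigr)$, which is bounded and $\Gamma$-equivariant and whose coboundary is the difference of the two cocycles because $c$ is a cocycle. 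The paper instead shortcuts this by observing that $p^*\circ f^*=f^*$ on the nose (since $p$ preserves horoballs and $f$ is constant on them) and that the map from the $(G/P)$-complex to the $(G/K)$-complex extends the identity in degree $-1$, hence induces the identity on $H^*_b(\Gamma)$ -- the same mechanism you use to say that $\hat c$ represents $\alpha$. Your worry about $q=1$ disappears since $H^1_b(\Gamma)=0$.
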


\begin{proof} It is shown in \cite[Section 6]{BucherMonod} that the bounded cohomology group $H^*_b(\Gamma)$ can be computed isometrically on the cochain complex of alternating $\Gamma$-invariant bounded functions $\ell^\infty_\mathrm{alt}((G/P)^{q+1},\mathbb{R})^\Gamma$ endowed with its homogeneous coboundary operator. We will use this cochain complex to induce the cohomology map $f^*:H^*_b(\Gamma)\rightarrow H^*_{cpct,b}(M)$ in degree $\geq 1$. Let thus $  f:\widetilde{M} \longrightarrow G/P$ be the cusp map. It
induces in degree $\geq 1$ a cochain map
$$f^*:\ell_{alt}^\infty((G/P)^{q+1},\mathbb{R})^\Gamma  \longrightarrow C^q_{cpct,b}(\widetilde{M})^\Gamma \cong C^q_{cpct,b}(M)$$
by sending a  bounded, $\Gamma$-invariant alternating cochain $c:(G/P)^{q+1}\rightarrow \mathbb{R}$ to the cochain $f^*(c)$ defined by mapping a singular simplex $\sigma:\Delta^q\rightarrow \widetilde{M}$ to $c(f(x_0),...,f(x_q))$, where $x_0,...,x_q\in \widetilde{M}$ are the vertices  of $\sigma$. The cochain $f^*(c)$ is clearly $\Gamma$-equivariant and bounded. Furthermore, it has compact support contained in $M(0)$ since if the image of $\pi(\sigma)$ does not intersect $M(0)$, then it is contained in a cusp $E_i$, so that $\sigma$ is contained in one horoball, and its vertices $x_0,...,x_q$ all project to the same boundary point $f(x_0)=...=f(x_q)\in G/P$. Since the cochain $c$ is alternating, $f^*(c)(\sigma)=0$. It is for this last assertion that we use the fact that we are in degree $\geq 1$.

It is immediate that $f^*$ commutes with the coboundary operator $\delta$ and hence induces a cohomology map
$$ f^*: H^*_{b}(\Gamma)\longrightarrow H^*_{cpct, b}(M).$$
Since the cochain map $f^*$ does not augment norm, the last assertion of the theorem is obvious. It remains to see that $ trans \circ f^*=trans_\Gamma$.

Consider the following diagram, where the cochain map $p^*$ is as defined in the beginning of Subsection \ref{subsec: def of p}:
\begin{equation*}
\xymatrix{ \ell_{alt}^\infty((G/P)^{q+1},\mathbb{R})^\Gamma  \ar@/^2pc/[rr]^{f^*} \ar[r]^{\ \ \ \ \ \  f^*  }&C^q_{cpct,b}(M) \ar[r]^-{p^* }  \ar@<-.7ex>[rd]_{trans } &  C^q_b(G/K)^\Gamma \ar[d]^{trans_\Gamma} \\
 & & C^q_b(G/K)^G.}
\end{equation*}
Observe that the composition $p^*\circ f^*$ is equal to $f^*$. Since the cochain map $f^*: \ell_{alt}^\infty((G/P)^{*+1})^\Gamma\rightarrow C^q_b(G/K)^\Gamma $ extends the identity $\mathbb{R}\rightarrow \mathbb{R}$ in degree $-1$ it induces the identity on cohomology. Thus, on cohomology, the
diagram becomes
\begin{equation*}
\xymatrix{H^q_b(\Gamma) \ar@/^2pc/[rr]^{f^*=Id} \ar[r]^{f^*  }&H^q_{cpct,b}(M) \ar[r]^{p^* \ \ \ }  \ar@<-.7ex>[rd]_{trans } &  H^q_b(\Gamma) \ar[d]^{trans_\Gamma} \\
 & & H^q_b(G/K)^G.}
\end{equation*}
It is now clear that $trans_\Gamma= trans_b \circ f^*$, which finishes the proof of the theorem. \end{proof}

%\begin{lemma} \label{lemma: diag commutes} The diagram
%\begin{equation*}
%\xymatrix{ H^n_{c,b}(G,\br)    & H^n_{b}(\Gamma,\br) \ar@{->>}[l]^{trans_\Gamma} & H^n_{cpct, b}(M,\br)  \ar[l]^{\cong \ \ \ }_{p^* \ \ \ \ } \ar@/_2pc/[ll]^{trans} }
%\end{equation*}
%commutes.\marginpar{add a proof.}
%\end{lemma}

\subsection{Proof of the second inequality of Proposition \ref{mainProp}}
We can now prove that $ ||\omega_M||_\infty \leq ||\omega_{\widetilde{M}}||_\infty$. Consider the following diagram:

\begin{equation*}
\xymatrix{ H^n_{c,b}(G,\br)  \ar@<+.7ex>@{^{(}->}[r]^{i^*} \ar[d]^c & H^n_{b}(\Gamma,\br) \ar@<+.7ex>@{->>}[l]^{trans_\Gamma}\ar[r]^{f^* \ \ \ }  & H^n_{cpct, b}(M,\br)  \ar@/_2pc/[ll]^{trans_b} \ar[d]^c \\
H^n_{c}(G,\br)  & & \ar[ll]_{trans} H^n_{cpct}(M,\br). }
\end{equation*}
Using that $trans$ commutes with the comparison map $c$, that $trans_b \circ f^*=trans_\Gamma$ (Theorem \ref{thm: induced from coning}), and that $trans_\Gamma \circ i^*$ is the identity on $H^n_{c,b}(G,\br)$, we show that
$$ trans \circ c \circ f^* \circ i^* = c. \label{equ: comm diag}$$
Indeed,
\begin{eqnarray*}
\underbrace{trans \circ c}_{=c\circ trans_b} \circ f^*\circ i^* &=& c\circ {\underbrace{trans_b \circ f^* }_{=trans_\Gamma}}\circ i^*\\
&=& c\circ \underbrace{trans_\Gamma \circ i^*}_{=\mathrm{Id}} =c.
\end{eqnarray*}
It remains to establish the desired inequality:
\begin{eqnarray*}
\| \omega_M\|_\infty&=& \mathrm{inf}\{ \| \alpha \|_\infty\mid \alpha\in H^n_{cpct, b}(M,\br),\  c(\alpha)=\omega_M \}\\
&\leq & \mathrm{inf}\{ \| \alpha \|_\infty\mid \alpha\in \mathrm{Im}(f^* \circ i^*),\  trans\circ c(\alpha)=trans(\omega_M)=\omega_{\widetilde{M}} \}\\
&=&\mathrm{inf}\{ \underbrace{\| (f^* \circ i^*)(\beta) \|_\infty}_{\leq \|\beta \|_\infty} \mid \beta\in H^n_{c,b}(G,\br),\  \underbrace{trans\circ c\circ f^* \circ i^*}_{=c}(\beta)=\omega_{\widetilde{M}} \}\\
&=& \| \omega_{\widetilde{M}} \|_\infty,
\end{eqnarray*}
where we have used the facts that the map $trans:H^n_{cpct}(M,\br)\rightarrow H^n_c(G,\br)$ is injective (even bijective since $trans(\omega_M)=\omega_{\widetilde{M}}$) in top degree and that $f^*$ and $i^*$ do not increase norms. This finishes the proof of the second inequality of Proposition \ref{mainProp}.

\section{The product of $\br$-rank $1$ symmetric spaces}\label{section: product of rank 1}

In this section, we establish the proportionality principle for the simplicial volume of certain $\mathbb{Q}$-rank $1$ locally symmetric spaces covered by a product of $\br$-rank $1$ symmetric spaces without any assumption on their ends.

Let $H$ be the full isometry group of $\widetilde{M}$. An action of $H$ on $\br$ is defined as follows: An element $h$ of $H$ acts by multiplication by $+1$ (resp. $-1$) if $h$ preserves (resp. reverses) the orientation in $\widetilde{M}$. We denote by $\br_\varepsilon$ the Banach space $\br$ endowed with the action of $H$. The continuous $H^n_{c}(H,\br_\varepsilon)$ and continuous, bounded $H^n_{c,b}(H,\br_\varepsilon)$ cohomology of $H$ with coefficients in $\br_\varepsilon$ is defined by replacing the $H$-invariant cochains by $H$-equivariant ones in the corresponding definitions.

%\begin{thm}\label{thm:7.1} Let $M$ be a $\mathbb{Q}$-rank $1$ locally symmetric $n$-space covered by a product of $\br$-rank $1$ symmetric spaces. Let $H$ be the isometry group of the universal cover $\widetilde{M}$ of $M$. Suppose that the comparison map $c : H^n_{c,b}(H,\br_\varepsilon) \rightarrow H^n_c(H,\br_\varepsilon)$ is an isomorphism. Then, $$\| M\|_\mathrm{lf}=\frac{\mathrm{Vol}(M)}{\| \omega_{\widetilde{M}}\| _\infty}.$$
%\end{thm}

\begin{proof}[Proof of Theorem \ref{thm:1.4}]
The simplicial volume $\| M \|_\mathrm{lf}$ of $M$ can be reformulated in terms of $\ell^1$-homology as follows:
\begin{eqnarray}\label{eqn:4} \| M \|_\mathrm{lf} =\inf_{\alpha \in [M]^{\ell^1}} \sup \left\{ \frac{1}{\| \varphi \|_\infty} \ \bigg| \ \varphi \in H^n_b(M) \text{ and } \langle \varphi, \alpha \rangle =1 \right\}, \end{eqnarray}
where $[M]^{\ell^1}$ is the set of all $\ell^1$-homology classes that are represented by at least one locally finite cycle with finite $\ell^1$-norm. For a proof, see \cite{Loeh}. For the reader's convenience, we sketch a proof that the Kronecker product $\langle \cdot, \cdot \rangle : H^*_b(M) \otimes H^{\ell^1}_*(M) \rightarrow \br$ is well defined. In fact, this follows from a well-known fact in functional analysis that $C_b^*(M)$ is the topological dual space of the $\ell^1$-completion $C^{\ell^1}_*(M)$ of $C_*(M)$. In other words, a linear functional on $C^{\ell^1}_*(M)$ is continous if and only if it is bounded. Let $f : C_*(M) \rightarrow \br$ be a bounded cochain in $C_b^*(M)$. Denote by $f' :C^{\ell^1}_*(M) \rightarrow \br$ the unique continuous extension of $f$. One can define a map $\langle \cdot,\cdot \rangle : C_b^*(M) \otimes C^{\ell^1}_*(M) \rightarrow \br$ via the evaluation map between $C^{\ell^1}_*(M)$ and its dual cochain complex by
$$\langle f, \cdot \rangle =f'(\cdot),$$
Noting that $\delta f (c) = f(\partial c)$ for $c \in C_{*+1}(M)$, it is easy to see that $(\delta f)'(c')=f'(\partial c')$ for $c' \in C^{\ell^1}_{*+1}(M)$. Hence, we have
$$\langle \delta f, c' \rangle = (\delta f)'(c')= f'(\partial c') = \langle f, \partial c' \rangle.$$
This implies that the Kronecker product between $H^*_b(M)$ and $H^{\ell^1}_*(M)$ is well defined.

Define a $H$-invariant cocycle $\Theta: \widetilde{M}^{n+1} \rightarrow \br_\varepsilon$ by
$$\Theta(x_0,\ldots,x_n) = \int_{[x_0,\ldots,x_n]} \omega_{\widetilde{M}}$$
where $[x_0,\ldots,x_n]$ is the geodesic simplex in $\widetilde{M}$ with an ordered vertex set $\{x_0,\ldots,x_n\}$ and $\omega_{\widetilde{M}}$ is the $H$-invariant volume form on $\widetilde{M}$. Note that the cocycle $\Theta$ is bounded because the volume of top dimensional geodesic simplices in a product of $\br$ rank $1$ symmetric spaces is uniformly bounded from above.
Hence, $\Theta$ determines a continuous cohomology class $[\Theta]^H \in H^n_c(H,\br_\varepsilon)$ and a continuous bounded cohomology class $[\Theta]^H_b$ in $H^n_{c,b}(H,\br_\varepsilon)$ with $c([\Theta]^H_b)=[\Theta]^H$.

Let $G$ be the connected component of the identity of $H$. Note that $G$ is a finite index subgroup of $H$.
Hence, the inclusion $G \subset H$ induces isometric embeddings $res^H : H^*_c(H,\br_\varepsilon) \rightarrow H^*_c(G,\br)$ and $res^H_b : H^*_{c,b}(H,\br_\varepsilon) \rightarrow H^*_{c,b}(G,\br)$ which are realized by the canonical inclusions of cocomplexes $C^*_c(\widetilde{M},\br_\varepsilon)^H \subset C^*_c(\widetilde{M},\br)^G$ and $C^*_{c,b}(\widetilde{M},\br_\varepsilon)^H \subset C^*_{c,b}(\widetilde{M},\br)^G$. Similarly, since $\Gamma$ is a lattice in $G$, the inclusion $\Gamma \subset G$ induces an isometric embedding $res^G_b : H^*_{c,b}(G,\br) \rightarrow H^*_b(\Gamma,\br)$ where $\Gamma$ is the fundamental group of $M$.

It is clear that $res^H([\Theta]^H)=\omega_{\widetilde{M}}$. Since the comparison map is an isomorphism and $res^H$, $res^H_b$ and $res^G_b$ are isometric embeddings,
$$\| (res^G_b \circ res^H_b )([\Theta]^H_b) \|_\infty =\| [\Theta]^H_b \|_\infty = \| [\Theta]^H \|_\infty = \|\omega_{\widetilde{M}} \|_\infty.$$

Clearly, the cocycle $\Theta$ represents $(res^G_b \circ res^H_b )([\Theta]^H_b) \in H^n_b(M,\br)$.
Let $\alpha \in [M]^{\ell^1}$. Then there is a locally finite fundamental cycle $c$ with $\| c\|_1 <\infty$ such that $c$ represents $\alpha$ in the $\ell^1$-homology of $M$. For $\mathbb{Q}$-rank $1$ locally symmetric spaces, the geodesic straightening map on the locally finite chain complex is well defined and chain homotopic to the identity \cite{Kim-Kim}. Hence, the geodesic straightening $str(c)$ of $c$ is again a locally finite fundamental cycle with $\|str(c) \|_1 \leq \|c \|_1 < \infty$ and we have $$str(c)-c=\partial H_n^\mathrm{lf}(c) + H_{n-1}^\mathrm{lf} \partial (c) = \partial H_n^\mathrm{lf}(c)$$
where $H_*^\mathrm{lf} : C_*^\mathrm{lf}(M,\br) \rightarrow  C_{*+1}^\mathrm{lf}(M,\br)$ is a chain homotopy between the geodesic straightening map and the identity that is constructed by the canonical straight line homotopy. It is easy to see that $$\| H_n^\mathrm{lf}(c) \|_1 \leq (n+1) \|c\|_1 <\infty $$
and thus, $str(c)$ and $c$ represent the same $\ell^1$-homology class $\alpha$.
Furthermore, note that
\begin{eqnarray}\label{eqn:6} \langle \Theta, str(c) \rangle = \mathrm{Vol}(M).\end{eqnarray}
We refer the reader to \cite{Kim-Kim} for more details.

Equation (\ref{eqn:6}) implies that for any $\alpha \in [M]^{\ell^1}$,
$$\left\langle \frac{(res^G_b \circ res^H_b )([\Theta]^H_b)}{\mathrm{Vol}(M)}, \alpha \right\rangle = \frac{\langle \Theta, str(c) \rangle}{\mathrm{Vol}(M)}=1.$$
Hence, in the view of Equation (\ref{eqn:4}), we have
$$  \| M \|_\mathrm{lf} = \frac{\mathrm{Vol}(M)}{\| \omega_M \|^\infty} \geq \frac{\mathrm{Vol}(M)}{\| (res^G_b \circ res^H_b )([\Theta]^H_b) \|_\infty}=\frac
{\mathrm{Vol}(M)}{\| \omega_{\widetilde{M}} \|_\infty}.$$
Thus, we obtain an inequality $ \| \omega_M \|^\infty \leq \| \omega_{\widetilde{M}} \|_\infty$.
From the opposite inequality in Section \ref{sec:5.3}, we finally have
$$ \| \omega_M \|^\infty = \| \omega_{\widetilde{M}} \|_\infty,$$
which completes the proof.
\end{proof}

\begin{remark} Theorem \ref{thm:1.4} holds for reducible $\mathbb{Q}$-rank $1$ locally symmetric spaces.
Note that the fundamental groups of the ends of reducible $\mathbb{Q}$-rank $1$ locally symmetric spaces are not amenable.
Hence, this strongly supports the existence of the proportionality principle for the simplicial volume of arbitrary $\mathbb{Q}$-rank $1$ locally symmetric spaces. \end{remark}

\end{document}